\theoremstyle{plain}
\newtheorem{theorem}{Theorem}[section]
\newtheorem{proposition}[theorem]{Proposition}
\theoremstyle{definition}
\theoremstyle{remark}
\newtheorem{remark}[theorem]{Remark}
\def\BW#1{\textbf{\textcolor{red}{#1}}}
\def\va{{\bm{a}}}
\def\vb{{\bm{b}}}
\def\vf{{\bm{f}}}
\def\vg{{\bm{g}}}
\def\vh{{\bm{h}}}
\def\vk{{\bm{k}}}
\def\vp{{\bm{p}}}
\def\vs{{\bm{s}}}
\def\vu{{\bm{u}}}
\def\vw{{\bm{w}}}
\def\vx{{\bm{x}}}
\def\vy{{\bm{y}}}
\def\vz{{\bm{z}}}
\def\mA{{\bm{A}}}
\def\mI{{\bm{I}}}
\def\mL{{\bm{L}}}
\def\mW{{\bm{W}}}
\def \RR {{\mathbb{R}}}
\title{Proximal Implicit ODE Solvers for Accelerating Learning Neural ODEs}
\footnotesize\author{
  Justin Baker$^*$ \\
  Department of Mathematics\\ 
  University of Utah\\ 
  \and
  Hedi Xia\footnote{Co-first author.} \\
 Department of Mathematics\\
  University of California, Los Angeles\\
  \and
  Yiwei Wang \\
  Departmemt of Applied Mathematics\\
  Illinois Institute of Technology\\
  \and
  Elena Cherkaev,\ Akil Narayan \\
  Department of Mathematics\\
  University of Utah\\
  \and
  Long Chen,\ Jack Xin \\
  Department of Mathematics\\
  University of California, Irvine\\
  \and
  Andrea L. Bertozzi,\ Stanley J. Osher \\
  Department of Mathematics\\
  University of California, Los Angeles\\
  \and
Bao Wang\footnote{Please correspond to: wangbaonj@gmail.com} \\
Department of Mathematics\\
  Scientific Computing and Imaging (SCI) Institute\\
  University of Utah\\ 
}}
\begin{document}

\maketitle

\begin{abstract}
Learning neural ODEs often requires solving very stiff ODE systems, primarily using explicit adaptive step size ODE solvers. These solvers are computationally expensive, requiring the use of tiny step sizes for numerical stability and accuracy guarantees. This paper considers learning neural ODEs using implicit ODE solvers of different orders leveraging proximal operators. The proximal implicit solver consists of inner-outer iterations: the inner iterations approximate each implicit update step using a fast optimization algorithm, and the outer iterations solve the ODE system over time. The proximal implicit ODE solver guarantees superiority over explicit solvers in numerical stability and computational efficiency. We validate the advantages of proximal implicit solvers over existing popular neural ODE solvers on various challenging benchmark tasks, including learning continuous-depth graph neural networks and continuous normalizing flows.
\end{abstract}

\section{Introduction}\label{sec:intro}
Neural ODEs \cite{chen2018neural} are a class of continuous-depth neural networks \cite{rosenblatt1961principles,cohen1983absolute} that are particularly suitable for learning complex dynamics from irregularly-sampled sequential data, see, e.g., \cite{chen2018neural,latentODE,NEURIPS2019_21be9a4b,massaroli2020dissecting,norcliffe2020_sonode}. Neural ODEs are used in many  applications,
including image classification and image generation \cite{chen2018neural}, learning dynamical systems \cite{latentODE}, modeling probabilistic distributions of complex data by a change of variables \cite{grathwohl2018scalable,pointflow,jiang2020shapeflow}, and scientific computing \cite{dutta2021neural,baker2022learning}.  
Recently, neural ODEs have also been employed for building continuous-depth graph neural networks (GNNs), achieving remarkable results for deep graph learning \cite{poli2019graph,pmlr-v119-xhonneux20a,
pmlr-v139-chamberlain21a,thorpe2022grand}. Mathematically, a neural ODE can be formulated by the following first-order ODE:
\begin{equation}\label{eq:NODE}
\frac{d{\vh}(t)}{dt}= {\bm f}({\vh}(t),t,\theta),
\end{equation}
where ${\bm f}({\vh}(t),t,\theta) \in \RR^d$ is specified by a neural network parameterized by $\theta$, e.g., a two-layer feed-forward neural network. Starting from the input ${\vh}(0)$, neural ODEs learn the representation of the input 
and perform prediction by solving \eqref{eq:NODE} from the initial time $t=0$ to the terminal time $T$ using a numerical ODE solver with a given error tolerance, often with adaptive step size solver or adaptive solver for short \cite{DORMAND198019}. Solving \eqref{eq:NODE} from $t=0$ to $T$ in a single pass with an adaptive solver requires evaluating $\vf({\vh}(t),t,\theta)$ at various timestamps, with the computational complexity counted by the number of forward function evaluations (forward NFEs), which is nearly proportional to the computational time, see \cite{chen2018neural} for details.

The \emph{adjoint sensitivity method}, or the adjoint method \cite{adjoint}, is a memory-efficient algorithm for training neural ODEs. If we regard the output ${\vh}(T)$ as the prediction and denote the loss between ${\vh}(T)$ and the ground truth as $\mathcal{L}:=\mathcal{L}(\vh(T))$. Let ${\va}(t):={\partial \mathcal{L}}/{\partial {\vh}(t)}$ be the adjoint state, then we have (see e.g., \cite{chen2018neural,adjoint} for details)
\begin{equation}\label{eq:NODE:gradient}
\frac{d\mathcal{L}}{d\theta} = \int_0^T{\va}(t)^\top \frac{\partial \vf({\vh}(t),t,\theta)}{\partial\theta} dt,
\end{equation}
with ${\va}(t)$ satisfying the following adjoint ODE
\begin{equation}\label{eq:NODE:adjoint}
\frac{d{\va}(t)}{dt} = -{\va}(t)^\top \frac{\partial}{\partial {\vh}} \vf({\vh}(t),t,\theta),
\end{equation}
which is solved numerically from $t=T$ to $0$ and also requires the evaluation of the right-hand side of \eqref{eq:NODE:adjoint} at various timestamps, with the computational complexity, or number of evaluation of the function $-{\va}(t)^\top \frac{\partial}{\partial {\vh}} \vf({\vh}(t),t,\theta)$, measured by the backward NFEs. 

\subsection{Computational bottlenecks of Neural ODEs}
\begin{figure}[!ht]
\centering
\begin{tabular}{cc}
\hskip -0.2cm
\includegraphics[clip, trim=0.01cm 0.01cm 0.01cm 0.01cm, width=0.49\columnwidth]{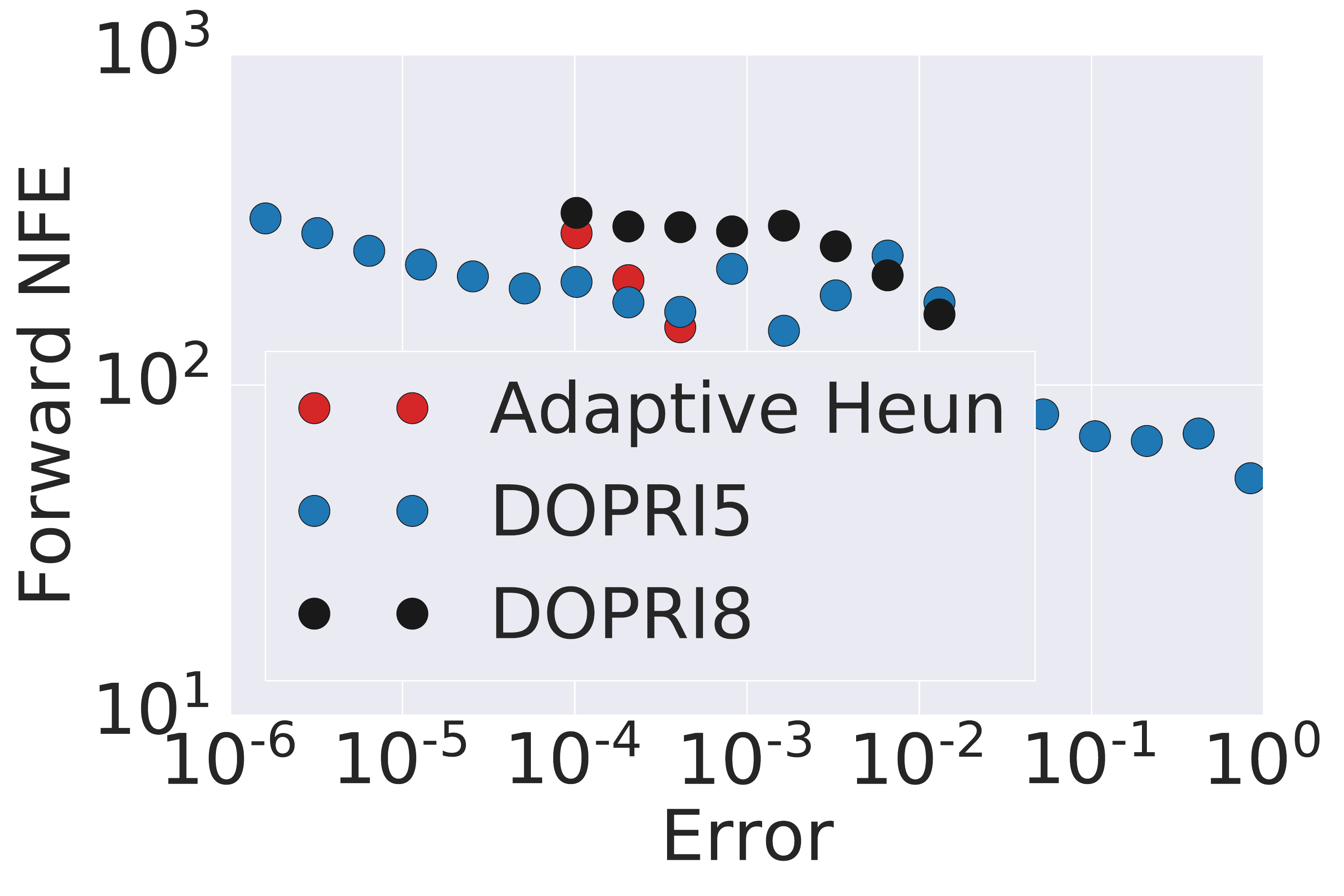}&
\hskip -0.4cm
\includegraphics[clip, trim=0.01cm 0.01cm 0.01cm 0.01cm, width=0.49\columnwidth]{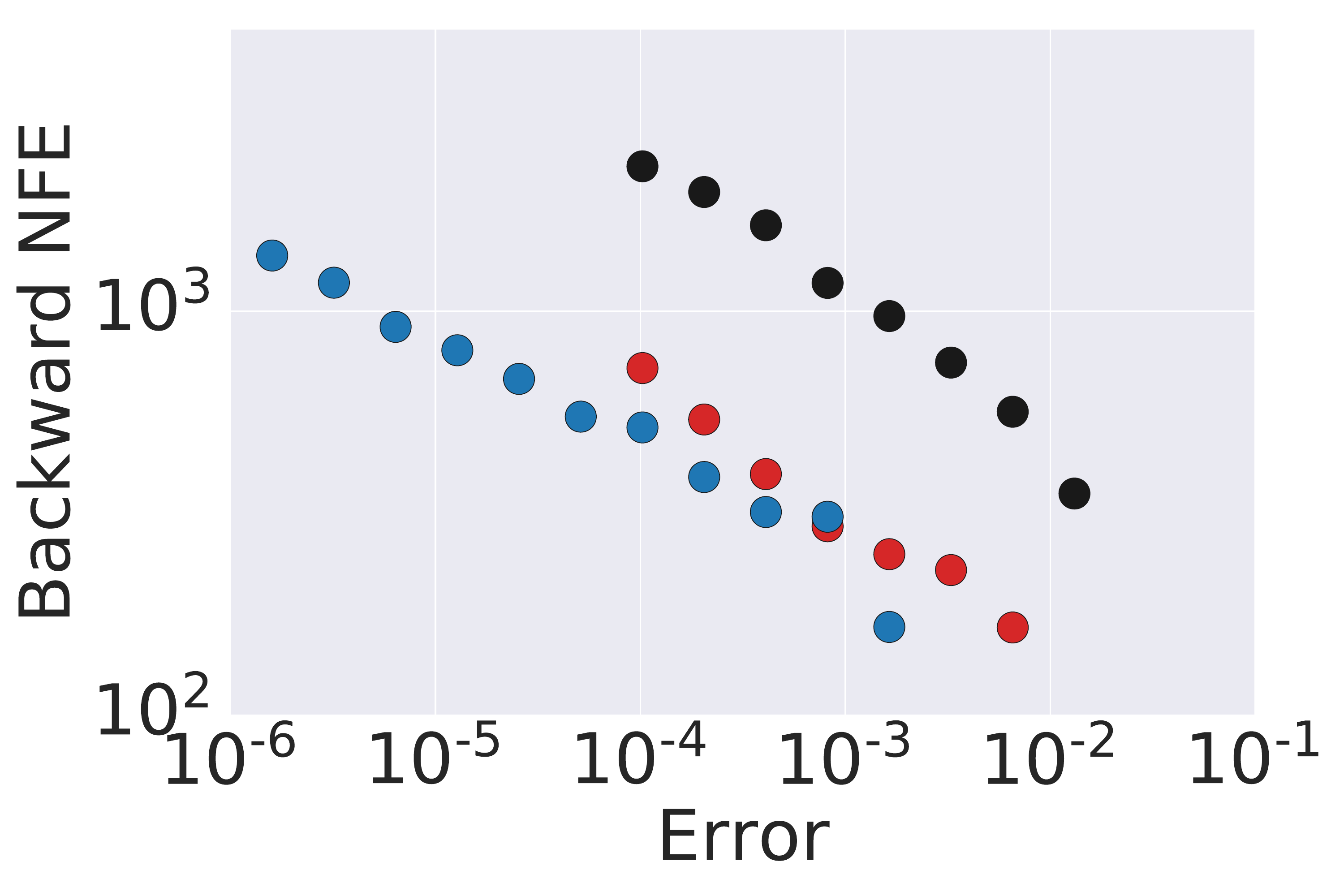}\\
\end{tabular}
\caption{Error tolerance vs. forward and backward NFEs of different adaptive solvers for training the GRAND model for CoauthorCS graph node classification.}
\label{fig:GRAND-NFEs}
\end{figure}
The computational bottlenecks of solving the neural ODE \eqref{eq:NODE} and its adjoint ODE \eqref{eq:NODE:adjoint} come from the numerical stability and numerical accuracy guarantees. Since $\vf(\vh(t),t,\theta)$ is usually high dimensional; direct application of implicit ODE solvers requires solving a system of high dimensional nonlinear equations, which is computationally inefficient, see Section~\ref{subsec:diffusion-1D} for an illustration. As such, explicit solvers --- especially the explicit adaptive solvers, e.g., the Dormand-Prince method \cite{DORMAND198019} --- are the current default numerical solvers for neural ODE training, test, and inference; see Appendix~\ref{sec:explicit-solvers} for a brief review of explicit adaptive neural ODE solvers. On the one hand, to guarantee numerical accuracy, we have to employ very small step sizes to discretize the neural ODE and its adjoint ODE. On the other hand, both ODEs involved in learning neural ODEs are usually very stiff, which further constrains the step size. 
Intuitively, a stiff ODE system has very different eigenvalues 
that reveal dramatically different scales of dynamics. To resolve these dynamics accurately, we have to accommodate for dynamics at significantly different scales, resulting in the use of very small step sizes. To demonstrate this issue, we consider training a recently proposed neural ODE-based GNN, named graph neural diffusion (GRAND) \cite{pmlr-v139-chamberlain21a}, for the benchmark CoauthorCS graph node  
classification; the detailed experimental settings can be found in Section~\ref{subsec:GRAND}. GRAND parametrizes the right-hand side of \eqref{eq:NODE} with the Laplacian operator, resulting in a class of diffusion models. The diffusion model is very stiff from the numerical ODE viewpoint since the ratio between the magnitude of the largest and smallest eigenvalues is infinite. 
Figure~\ref{fig:GRAND-NFEs} plots the error tolerance of the adaptive solver vs. forward and backward NFEs of different adaptive solvers --- including adaptive Heun \cite{suli2003introduction} and two Dormand-Prince methods \cite{DORMAND198019} (DOPRI5 and DOPRI8) --- for training GRAND for CoauthorCS node classification. We see that 1) all three adaptive solvers require significant NFEs in solving both neural ODE and its adjoint ODE; as the error tolerance reduces both forward and backward NFEs increase very rapidly. And 2) {the} higher-order scheme, e.g., DOPRI8 requires more NFEs than the lower-order scheme, indicating that the stability is a dominating factor in choosing the step size. Moreover, it is worth mentioning that both DOPRI5 and DOPRI8 often fail to solve the adjoint ODE when a large error tolerance is used, based on our experiments. 












The fundamental issues of learning stiff neural ODEs using explicit solvers due to stability concerns and the high computational cost of direct application of implicit solvers motivate us to study the following question: 

\emph{Can we modify or relax the implicit solvers to make them suitable for learning neural ODEs with significantly reduced computational costs than existing benchmark solvers?}

\subsection{Our contribution}

We answer the above question affirmatively by considering learning neural ODE-style models using the proxy of a few celebrated implicit ODE solvers --- including backward Euler, backward differentiation formulas (BDFs), and Crank-Nicolson --- leveraging the proximal operator \cite{parikh2014proximal}. These proximal solvers reformulate implicit ODE solvers as variational problems. Each solver contains inner-outer iterations, where the inner iterations approximate a one-step update of the implicit solver, and the outer iterations solve the ODE over time. Leveraging fast optimization algorithms for solving the inner optimization problem, these proximal solvers are remarkably faster than explicit adaptive ODE solvers in learning stiff neural ODEs. 
We summarize the major benefits of proximal solvers below:
\begin{itemize}
\item Due to the implicit nature of proximal algorithms, they are much less encumbered by the numerical stability issue than explicit solvers. 
Therefore, the proximal algorithms allow the use of very large step sizes for neural ODEs training, test, and inference. 

\item To achieve the same numerical accuracy in solving stiff neural ODEs, proximal algorithms can save 
significant NFEs in both forward and backward propagation. 

\item Training neural ODE-style models 
using proximal algorithms maintains 
the generalization accuracy of the model compared to using benchmark adaptive solvers.  
\end{itemize}







\subsection{More related works}
In this part, we discuss several related 
works in three directions: reducing NFEs in learning neural ODEs, advances of proximal algorithms, and the development of algorithms for learning neural ODEs. 


\paragraph{Reducing NFEs in learning neural ODEs by model design and regularization.}
Several algorithms have been developed to reduce the NFEs for learning neural ODEs. They can be classified into three categories: 1. Improving the ODE model and neural network architecture, and notable works in this direction include augmented neural ODEs \cite{NEURIPS2019_21be9a4b}, higher-order neural ODEs \cite{norcliffe2020_sonode}, heavy-ball neural ODEs \cite{HBNODE:2021}, and neural ODEs with depth variance \cite{massaroli2020dissecting}. These models can reduce the forward NFEs significantly, and heavy-ball neural ODEs can also reduce the backward NFEs remarkably. 2. Learning neural ODEs with regularization, including weight decay \cite{grathwohl2018scalable}, regularizing ODE solvers and learning dynamics \cite{pmlr-v119-finlay20a,NEURIPS2020_2e255d2d,NEURIPS2020_f1686b4b,NEURIPS2020_a9e18cb5,pmlr-v139-pal21a}. And 3. Input augmentation \cite{NEURIPS2019_21be9a4b} and data control \cite{massaroli2020dissecting}. Our work focuses on accelerating learning neural ODEs using the proximal form of implicit solvers.


\paragraph{Advances of proximal algorithms.} Proximal algorithms have been the workhorse for solving nonsmooth, large-scale, or distributed optimization problems \cite{parikh2014proximal}. The core matter is the evaluation of proximal operators \cite{bauschke2011convex}. The proximal algorithms are widely used in statistical computing and machine learning \cite{polson2015proximal}, 
channel pruning of neural 
networks \cite{yang2019channel,dinh2020sparsity}, image processing \cite{beck2009fast}, matrix completion \cite{marjanovic2012l_q,ZYX_17}, computational optimal transport \cite{salim2020wasserstein,peyre2019computational}, game theory and optimal control \cite{attouch2008alternating}, etc.
Proximal operators can be viewed as 
backward Euler method, see Section~\ref{sec:proximal-solvers} for a brief mathematical introduction. From an implicit solver viewpoint, a proximal formulation of the backward Euler method has been applied to the stochastic gradient descent training of neural networks \cite{chaudhari2019entropy,chaudhari2018deep}.





\paragraph{Developments of efficient neural ODE learning algorithms.} 
Solving an ODE is required in both forward and backward propagation of learning neural ODEs. The default numerical solvers are explicit adaptive Runge-Kutta schemes \cite{press1992adaptive,chen2018neural}, especially the Dormand-Prince method \cite{DORMAND198019}. To solve both forward and backward ODEs accurately, the adaptive solvers will evaluate the right-hand side of ODEs at many intermediate timestamps, causing tremendous computational burden. Checkpoint schemes have been proposed to reduce the computational cost \cite{gholami2019anode,pmlr-v119-zhuang20a},
often reducing computational cost by compromising memory efficiency. Recently, the symplectic adjoint method has been proposed \cite{matsubara2021symplectic}, which solves neural ODEs using a symplectic integrator and obtains the exact gradient (up to rounding error) with memory efficiency. Approximate gradients using interpolation instead of adjoint method \cite{NEURIPS2020_c24c6525} has also been used to accelerate learning neural ODEs. Our proposed proximal solvers can be integrated with these new adjoint methods for training neural ODEs. 





\subsection{Notation}
We denote scalars by lower or upper case letters; vectors and matrices by lower and upper case boldface letters, respectively. We denote the magnitude of a complex number $z$ as $|z|$. For a vector $\vx = (x_1, \ldots, x_d)^\top\in \mathbb{R}^d$, we use $\|\vx\| := {(\sum_{i=1}^d |x_i|^2)^{1/2}}$ 
to denote its $\ell_2$-norm.
We denote the vector whose entries are all 0s as $\mathbf{0}$. For two vectors $\va$ and $\vb$, we denote their inner product as $\langle\va,\vb\rangle$. For a matrix $\mA$, we use $\mA^\top$ and $\mA^{-1}$ 
to denote its transpose and inverse, 
respectively. 
We denote the identity matrix as $\mI$. For a function $f(\vx): \mathbb{R}^d \rightarrow \mathbb{R}$, we denote $\nabla f(\vx)$ and $\nabla^2f(\vx)$ as its gradient and Hessian, respectively. We denote $a=\mathcal{O}(b)$, if there is a constant $C$ such that $a\leq Cb$.

\subsection{Organization}
We organize the paper as follows: 
In Section~\ref{sec:proximal-solvers}, we present the proximal form of a few celebrated implicit ODE solvers for learning neural ODEs. 
We analyze the numerical stability and convergence of proximal solvers and contrast them with explicit solvers in Section~\ref{sec:error-analysis}. We verify the efficacy of proximal solvers in Section~\ref{sec:exp}, followed by concluding remarks. Technical proofs and more experimental details are provided in the appendix.

\section{Proximal Algorithms for Neural ODEs}\label{sec:proximal-solvers}

In this section, we formulate the proximal version 
of several celebrated implicit ODE solvers, including backward Euler, Crank-Nicolson, BDFs, and a class of single-step multi-stage schemes.


\subsection{A proximal viewpoint of the backward Euler solver}
We first consider solving a neural ODE \eqref{eq:NODE} using the proximal backward Euler solver. Directly discretizing 
\eqref{eq:NODE} using the backward Euler scheme with a constant step size $s$ gives
\begin{equation}\label{eq:backward-euler}
\vh_{k+1} = \vh_k + s{\bm f}(\vh_{k+1}).
\end{equation}
Equation~\eqref{eq:backward-euler} is a system of nonlinear equations, which can be solved efficiently by Newton's method. But when the dimension of $\vf$ is very high, solving \eqref{eq:backward-euler} can be computationally infeasible. Nevertheless, if ${\bm f}(\vz)$ is the gradient of a function $- F({\vz})$, we can reformulate the update in \eqref{eq:backward-euler} using the proximal operator as follows
\begin{equation}\label{eq:prox-backward-euler}
\begin{aligned}
\vh_{k+1} 
&= \arg\min_{\vz \in \mathcal{A}}\Big\{\frac{1}{2s}\|\vz-\vh_k\|_2^2 +  F(\vz) \Big\},
\end{aligned}
\end{equation}
where $\vf(\vz) = - \nabla F(\vz)$, and $\mathcal{A}$ is the admissible domain of $\vz$, which is considered to be the whole space in this work; we omit $\mathcal{A}$ in the following discussion.
To see why \eqref{eq:prox-backward-euler} is equivalent to \eqref{eq:backward-euler}, we note that $\vh_{k+1}$ is a stationary point of $G(\vz) = \frac{1}{2}\|\vz-\vh_k\|_2^2+s F(\vz)$, resulting in
$$
\frac{d}{d\vz}\Big(\frac{1}{2s}\|\vz-\vh_k\|_2^2 + F(\vz)\Big)\Big|_{\vz = \vh_{k+1}} = 0,
$$
replacing $\vz$ with $\vh_{k+1}$ in the above equation, we get the backward Euler solver \eqref{eq:backward-euler} (note $\nabla F(\vz) = -\vf(\vz)$).
Based on the proximal formulation of the backward Euler method \eqref{eq:backward-euler}, we can solve the neural ODE \eqref{eq:NODE} using an inner-outer iteration scheme. 
The outer iterations perform backward Euler iterations over time, and the inner iterations solve the optimization problem formulated in \eqref{eq:prox-backward-euler}. 
We summarize the inner-outer iteration scheme for the proximal backward Euler solver in Algorithm~\ref{alg:proximal-backward-euler}.

\begin{algorithm}
\caption{Proximal backward Euler for solving \eqref{eq:NODE}}
\begin{algorithmic}\label{alg:proximal-backward-euler}
\REQUIRE Step size $s>0$, inner iteration number $n$\\
\textbf{for}~$k=1,2,\ldots$ \\
~~ \textbf{step 1}: Let $\vz^0=\vh_k$\\
~~~ \textbf{step 2}: Start from $\vz^0$ solve inner minimization problem $$
\arg\min_\vz\Big\{\frac{1}{2s}\|\vz-\vh_k\|_2^2 +  F(\vz) \Big\},$$
~~~~~~~~ resulting in a sequence $\vz^1,\vz^2,\ldots,\vz^n$.\\
~~~ \textbf{step 3}: Let $\vh_{k+1}=\vz^n$. \\
\textbf{end for}\\
\end{algorithmic}
\end{algorithm}

\subsubsection{Solving the inner minimization problem}
Another piece of the recipe is how to effectively solve the inner minimization problem of 
proximal backward Euler \eqref{eq:prox-backward-euler}, i.e., the following optimization problem
\begin{equation}\label{eq:inner-problem}
\arg\min_\vz G(\vz) := \frac{1}{2s}\|\vz-\vh_k\|_2^2 + F(\vz).
\end{equation}
One of the simple yet efficient algorithms for solving \eqref{eq:inner-problem} is gradient descent (GD), which updates as follows: for $i=1,2,\ldots,n-1$, perform
\begin{equation}\label{eq:GD}
\vz^{i+1} = \vz^i - \eta \nabla G(\vz^i),\ \eta>0,
\end{equation}
where $\eta$ is the step size and $\nabla G(\vz^i)=(\vz^i-\vh_k)/s - \vf(\vz^i)$. Here, notice that we do not need to know the exact form of $F(\vz)$, and the GD update becomes
\begin{equation}\label{eq:GD2}
\vz^{i+1}=\vz^i-\eta\Bigg(\frac{\vz^i-\vh_k}{s}-\vf(\vz^i)\Bigg).
\end{equation}
There are various off-the-shelf acceleration schemes that exist to accelerate the convergence of \eqref{eq:GD2}, e.g., Nesterov accelerated gradient (NAG) \cite{nesterov1983method}, NAG with adaptive 
restart (Restart) \cite{roulet2017sharpness}, and GD with nonlinear conjugate gradient style momentum, e.g., Fletcher–Reeves momentum (FR) \cite{fletcher1964function,wang2020stochastic}. In particular, FR, which will be used in our experiments, updates $\vz^i$ as follows: 
\begin{equation}\label{eq:FR}
\begin{aligned}
\vp^i &= \Bigg(\frac{\vz^i-\vh_k}{s}-\vf(\vz^i)\Bigg) + \beta_i\vp^{i-1}\\
\vz^{i+1} &= \vz^i - \eta \vp^i,
\end{aligned}
\end{equation}
where $\vp_0={\bf 0}$ and the scalar $\beta_i$ is given below
\begin{equation}\label{eq:beta}
\beta_i=\begin{cases}
0 &\  \mbox{if}\ i=0,\\
\frac{\big({(\vz^i-\vh_k)}/{s}-\vf(\vz^i)\big)^\top \big({(\vz^i-\vh_k)}/{s}-\vf(\vz^i)\big)}{ 
\big({(\vz^{i-1}-\vh_k)}/{s}-\vf(\vz^{i-1})\big)^\top \big({(\vz^{i-1}-\vh_k)}/{s}-\vf(\vz^{i-1})\big)} &\ \mbox{if}\ i\geq 1.
\end{cases}
\end{equation}

With the access to the gradient $-\vf(\vz)$ we can also employ L-BFGS \cite{liu1989limited} to solve the inner minimization problem \eqref{eq:inner-problem}. Based on our testing, 
{GD with FR momentum usually outperforms the 
other methods listed above}, see Section~\ref{subsec:diffusion-1D} for a comparison of different optimization algorithms for solving the one-dimensional diffusion problem.



\begin{remark}
The above discussion of gradient-based optimization algorithms assumes the existence of $F(\vz)$ whose gradient is $-\vf(\vz)$. When such a function $F(\vz)$ does not exist, we can still use the iteration \eqref{eq:GD2} to solve the inner minimization problem, which we can regard as a fixed point iteration. Moreover, we can accelerate the convergence of \eqref{eq:GD2} using the Anderson acceleration \cite{anderson1965iterative,walker2011anderson}, and we leave it as future work. Based on our numerical tests, gradient-based optimization algorithms work quite well in solving the inner optimization problem \eqref{eq:inner-problem} across all studied benchmark tasks.
\end{remark}

\paragraph{Stopping criterion of inner solvers.}
Given an error tolerance $\epsilon$ of the inner optimization solver, we stop the inner iteration if $\|\vz^{i+1}-\vz^i\|\leq \epsilon$.

\subsubsection{Explicit vs. proximal solvers}
\begin{figure}[!ht]
\centering
\begin{tabular}{cc}
\hskip -0.2cm
\includegraphics[clip, trim=0.01cm 0.01cm 0.01cm 0.01cm, width=0.46\columnwidth]{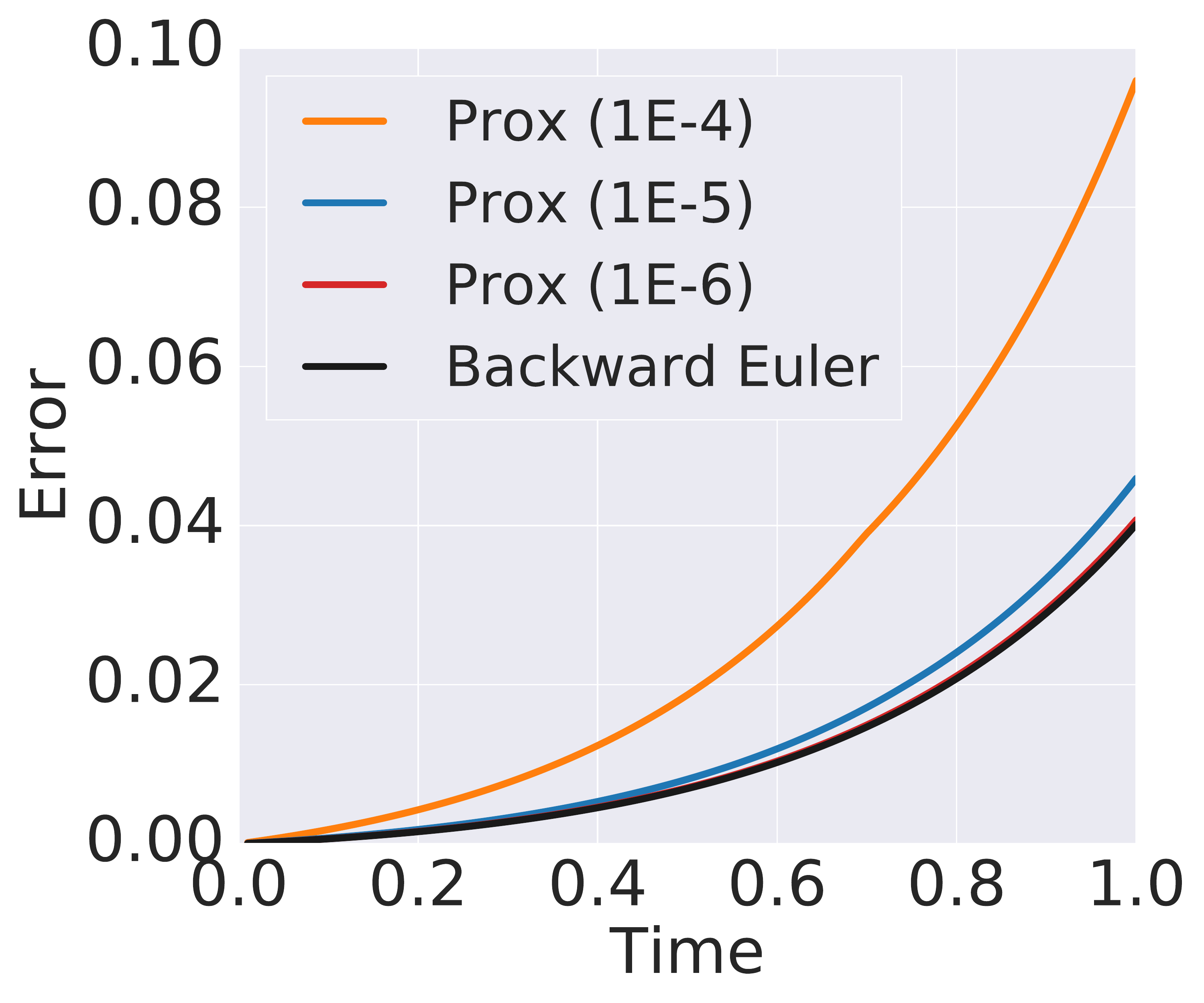}&
\hskip -0.4cm
\includegraphics[clip, trim=0.01cm 0.01cm 0.01cm 0.01cm, width=0.46\columnwidth]{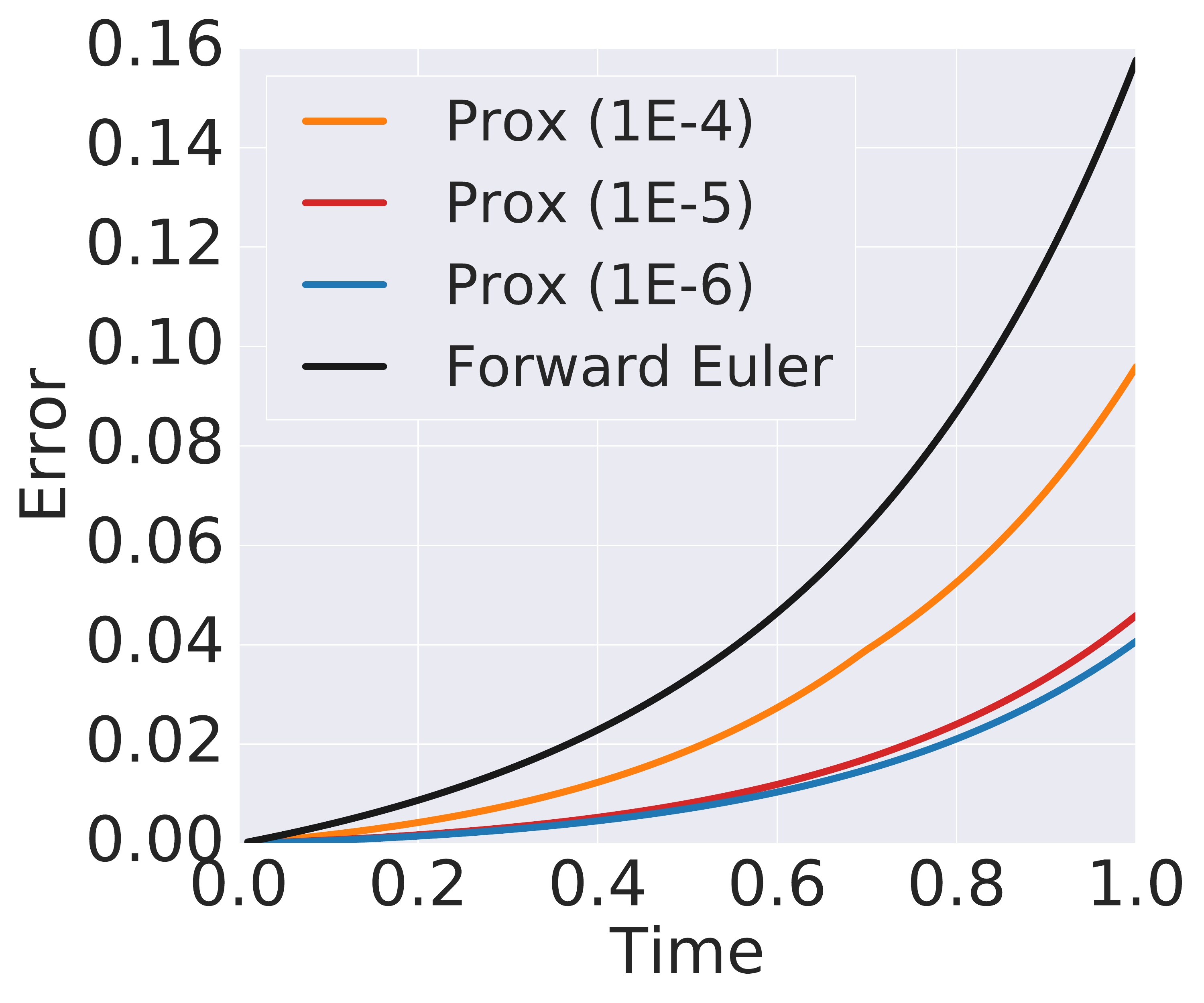}\\
(a) & (b) \\
\end{tabular}
\caption{(a) Time vs. numerical errors of the backward Euler method and the proximal backward Euler (Prox) with different inner error tolerances
for solving the one-dimensional ODE \eqref{eq:1D-benchmark}. As the error of the inner solver decreases, the proximal backward Euler approaches the backward Euler. (b) Comparison of proximal backward Euler using different inner solver accuracy against the forward Euler for solving the same problem in (a). We see that the proximal backward Euler method accumulates error more slowly than the forward Euler scheme.}
\label{fig:prox_impl_err}
\end{figure}

Before presenting more proximal implicit solvers, we compare forward Euler, backward Euler, and the proximal backward Euler for solving the following benchmark one-dimensional ODE, which comes from \cite{atkinson2011numerical}
\begin{equation}\label{eq:1D-benchmark}
\frac{dh(t)}{dt}=3h(t)-2\cos(t)-4\sin(t)
\end{equation}
with initial condition $h(0)=1$. We use the integration time step size $0.01$ for all the above ODE solvers.
Figure~\ref{fig:prox_impl_err} (a) plots the 
error between numerical and analytic solutions of proximal backward Euler with different inner solver tolerances and the backward Euler solver. 
It is evident that the proximal backward Euler approximates backward Euler quite well, and the approximation becomes more accurate as the accuracy of the inner solver enhances. Figure~\ref{fig:prox_impl_err} (b) contrasts the proximal backward Euler with the forward Euler solver. We see that with an appropriate tolerance of the inner solver, proximal backward Euler accumulates error slower than the forward Euler solver as the time increases. These advantages of the proximal backward Euler solver shed light on improving learning stiff neural ODEs and alleviating error accumulation.

\subsection{High-order implicit schemes via proximal operators}
A natural extension of the backward Euler scheme is the {single-step}, multi-stage schemes whose proximal form are proposed in \cite{zaitzeff2020variational}, and we formulated those schemes in Algorithm~\ref{alg:proximal-multi-stage}.
\begin{algorithm}
\caption{Proximal single-step, multi-stage scheme for solving  \eqref{eq:NODE}}
\begin{algorithmic}\label{alg:proximal-multi-stage}
\REQUIRE Step size $s>0$, stage $M$
\\
\textbf{for}~$k=1,2,\ldots$ \\
~~ \textbf{step 1}: Let $\vz^0=\vh_k$\\
~~~ \textbf{for}~$m=1,\ldots,M$ solve the inner problem
~~~ $$
\vz^{m} = \mathop{\arg \min}_{\vz} \Bigg\{ \sum_{i=0}^{m-1} \frac{\gamma_{m, i}}{2 s} \|{\vz} - {\vz}^{i}\|^2 + F({\vz})\Bigg\},$$
~~~~~~~~ resulting in a sequence $\vz^1,\vz^2,\ldots,\vz^M$.\\
~~~ \textbf{step 3}: Let $\vh_{k+1}=\vz^M$. \\
\textbf{end for}\\
\end{algorithmic}
\end{algorithm}

Clearly, the 1-stage scheme is the backward Euler scheme. The authors of \cite{zaitzeff2020variational} give conditions on the coefficient $\gamma_{m, i}$. However, it is quite tedious to get those coefficients. For example, a second-order unconditionally stable scheme can be constructed by taking the matrix $\boldsymbol{\Lambda}$ --- whose entries are $\{\gamma_{m,i}\}_{m,i=1}^3$ that appeared in Algorithm~\ref{alg:proximal-multi-stage} --- as follows
\begin{equation*}
\left(\begin{array}{ccc}
\gamma_{1,0} & 0 & 0 \\
\gamma_{2,0} & \gamma_{2,1} & 0 \\
\gamma_{3,0} & \gamma_{3,1} & \gamma_{3,2}
\end{array}\right) = 
\left(\begin{array}{ccc}
5 & 0 & 0 \\
-2 & 6 & 0 \\
-2 & \frac{3}{14} & \frac{44}{7}
\end{array}\right).
\end{equation*}
Furthermore, the matrix $\boldsymbol{\Lambda}$ of a third-order scheme is 
\begin{equation*}
\begin{pmatrix}
11.17 & 0 & 0 & 0 & 0 & 0 \\
-7.5 & 19.43 & 0 & 0 & 0 & 0 \\
-1.05 & -4.75 & 13.98 & 0 & 0 & 0 \\
1.8 & 0.05 & -7.83 & 13.8 & 0 & 0 \\
6.2 & -7.17 & -1.33 & 1.63 & 11.52 & 0 \\
-2.83 & 4.69 & 2.46 & -11.55 & 6.68 & 11.95
\end{pmatrix}
\end{equation*}

The main drawback of these multi-stage algorithms for training neural ODEs lies in the computational inefficiency. Note that for an $M$-stage scheme, one need to solve $M$ optimization problems in each iteration. 



\subsection{Proximal form of Crank-Nicolson}
Another {single-step}, second-order extension for the backward Euler 
is the  Crank-Nicolson scheme, given by
\begin{equation*}
 \vh_{k+1} = \vh_{k} + \frac{s}{2} ({\bm f} \left( \vh_{k+1} \right) + {\bm f} \left( \vh_{k} \right) ).
\end{equation*}
The Crank-Nicolson scheme is also known as an implicit Runge–Kutta method or  trapezoidal rule, which can be formulated in the following proximal form \cite{du2020phase}
\begin{equation}\label{eq:prox-cn}
\begin{aligned}\small
  {\vh}_{k+1} = \mathop{\arg\min}_{\vz} \Bigg\{& \frac{\|{\vz} - {\vh}_k\|^2}{s} + F({\vz})
  + \langle {\vz} - {\vh}_{k}, \nabla F({\vh}_k) \rangle \Bigg\}
\end{aligned}
\end{equation}

\subsection{The proximal viewpoint of BDF methods}
The 
backward Euler scheme has first-order accuracy, i.e., discretizing the ODE \eqref{eq:NODE} using \eqref{eq:backward-euler} with step size $s$ has error 
$\mathcal{O}(s)$. 
Backward differentiation formulas (BDFs) are higher-order multi-step 
methods that can be formulated as follows
\begin{equation*}
    \frac{a_s \vh_{k+1} - \mA_s(\vh_k, \vh_{k-1}, \ldots)}{s} = - \vf(\vh_{k+1}),
\end{equation*}
where $a_s$ is a constant, and $\mA_s$ is an operator acts on $\vh_k,\vh_{k-1},\ldots$. 
The backward Euler method corresponds to the case when $s = 1$, and $a_s = 1$ and $A_1(\vh_k) = \vh_k$. The second-order BDF2 scheme can be constructed by taking $a_s = \frac{3}{2}$ and  $\mA_2(\vh_k, \vh_{k-1}) = 2 \vh_k - \frac{1}{2} \vh_{k-1}$. 
Furthermore, the BDF2 scheme can be written in the following proximal form \cite{matthes2019variational,du2020phase}
\begin{equation*}\small
\vh_{k+1} = \mathop{\arg \min}_{\vz} \left\{ \frac{\|{\vz} - \vh_k\|^2}{s}  -  \frac{\|\vz - \vh_{k-1}\|^2}{4 s}+ F({\vz}) \right \},
\end{equation*}
if there exists $F(\vz)$ such that ${\bm f}(\vz) = - \nabla F(\vz)$.

We can further construct high-order BDFs, for instance the third- and fourth-order BDFs are given below
\begin{itemize}
\item BDF3:
\begin{equation*}\small
a_3 = \frac{11}{6}\ \ \mbox{and}\ \  \mA_3(\cdot)= 3 \vh_k - \frac{3}{2} \vh_{k-1} + \frac{1}{3} \vh_{k-2}.
\end{equation*}
\item BDF4:
\begin{equation*}\small
a_4 = \frac{25}{12}\ \ \mbox{and}\ \  \mA_4(\cdot) = 4 \vh_k - 3 \vh_{k-1} + \frac{4}{3} \vh_{k-2} - \frac{1}{4} \vh_{k-3}.
\end{equation*}
\end{itemize}
The corresponding proximal form of BDF3 and BDF4 are given as follows
\begin{itemize}
\item BDF3:
\begin{equation*}
\begin{aligned}
    \vh_{k+1} = & \mathop{\arg \min}_{\vz}  \left\{  \frac{3 \|{\vz} - \vh_k\|^2}{2 s}  -  \frac{3 \|\vz - \vh_{k-1}\|^2}{4 s} \right.
     \left. + \frac{\|\vz - \vh_{k-2}\|^2}{6 s} + F({\vz}) \right \},
\end{aligned}
\end{equation*}

\item BDF4:
\begin{equation*}
\begin{aligned}
\hspace{-0.2cm}    \vh_{k+1} &=  \mathop{\arg \min}_{\vz}  \left\{  \frac{2 \|{\vz} - \vh_k\|^2}{s}  -  \frac{3 \|\vz - \vh_{k-1}\|^2}{2 s} \right. 
     \left. + \frac{2 \|\vz - \vh_{k-2}\|^2}{3 s} - \frac{\|\vz - \vh_{k-2}\|^2}{8 s}  + F({\vz}) \right \},
\end{aligned}
\end{equation*}
\end{itemize}

{
Only one optimization problem needs to be solved in BDFs, which is the advantage of BDF methods over many other high-order implicit schemes.
}





\section{Stability and Convergence Analysis}\label{sec:error-analysis}
In this section, 
we will 1) compare the linear stability of different explicit and implicit solvers, 2) show that the error of the inner solver will not affect the stability of proximal solvers, and 3) analyze the convergence of proximal solvers.







\subsection{Linear stability: Implicit vs. explicit solvers}

It is well known that explicit solvers often require a small step size for numerical stability and convergence guarantees, especially for solving stiff ODEs. Consider the 
linear ODE
\begin{equation}\label{eq:linearODE}
    \vh' = \mA \vh, \quad \mA \in \mathbb{R}^{d \times d}.
\end{equation}
Assume $\mA$ has spectrum $\sigma(\mA) = \{ \lambda_j\}_{j=1}^d$ and ${\rm Re}(\lambda_j) < 0$ for $j = 1, \ldots d$, one can define the stiffness ratio as
\begin{equation*}
    Q = \frac{\max_{\lambda \in \sigma(\mA)} |{\rm Re}(\lambda)|}{\min_{\lambda \in \sigma(\mA)} |{\rm Re}(\lambda)|}
\end{equation*}
We say the ODE is stiff if $Q \gg 1$.
The {\bf linear stability domain} $\mathcal{D}$ of the underlying numerical method is the set of all numbers $z:=s \lambda_j$ for $j=1,\ldots,d$, such that $\lim_{k \rightarrow \infty} \vh_{k} = {\bf 0}$, where $\vh_k$ is the numerical solution of \eqref{eq:linearODE}  at the $k$-th step. Table. \ref{tab:linear stability} lists the linear stability domain of some single-step numerical schemes described above.

\begin{table}[!ht]
\begin{center}
\begin{tabular}{c|c}\hline
         \bf 
        \qquad \qquad \qquad Numerical methods \qquad \qquad \qquad 
         &  \qquad \qquad \qquad \bf Linear stability domain \qquad \qquad  \qquad \\
         \hline
         Forward Euler  & $\mathcal{D}_{\rm FE} = \{ z \in \mathbb{C} ~|~ |1 + z| < 1 \}$   \\
         Backward Euler & $\mathcal{D}_{\rm BE} = \{ z \in \mathbb{C} ~|~ |1 - z| > 1 \}$  \\
         Crank-Nicolson & $\mathcal{D}_{\rm CR} = \{ z \in \mathbb{C} ~|~ |\frac{1 + z/2}{1 - z / 2}| < 1 \}$   \\
         DOPRI5 & $\mathcal{D}_{\rm DP} = \{ z \in \mathbb{C} ~|~ | F_{1/2}(z)| < 1 \}$   \\
         \hline
    \end{tabular}
\end{center}
    \caption{Linear stability domains of several single step numerical ODE solvers. $|F_{1/2}(z)|<1$ stands for $|F_1(z)|<1$ and $|F_2(z)<1|$ where $F_1(z) = \sum_{r=0}^5 \frac{z^r}{r!} +\frac{z^{6}}{600}$ and $F_{2}(z)= \sum_{r=0}^4 \frac{z^r}{r!} + \frac{1097z^{5}}{120000}+\frac{161z^{6}}{120000}+\frac{z^{7}}{24000}$, see \cite{iserles2009first, DORMAND198019} for more details. }
    \label{tab:linear stability}
\end{table}

We call an ODE solver {\bf $A$-stable} if $\mathbb{C}^- := \{ z \in \mathbb{C} ~|~ {\rm Re }(z) < 0 \} \in \mathcal{D}$, where $\mathcal{D}$ is the linear stability domain of the solver. From Table~\ref{tab:linear stability},
it is clear that the backward Euler and Crank-Nicolson methods are both $A$-stable. For multi-step methods, e.g., BDFs,
there is no closed-form expression for the linear stability domain; see \cite{iserles2009first} for a detailed discussion. Among all BDFs, only BDF2 is $A$-stable. The $A$-stability of ODE solvers is related to the acceleration of the first-order optimization method \cite{luo2021differential}.


\subsection{The effect of the error of the inner solver}

Compared to the forward methods, backward methods obtain their next step by solving an implicit equation. Dense linear equations can be solved efficiently using exact methods like QR decomposition. In general, nonlinear equations, or even sparse linear equations, do not have the luxury. At best, an approximation to a solution bounded by a specific error tolerance is attained. Fortunately, a bound of linear growth can be established on the total effect caused by the inner solver error against per step tolerance. For example, we consider a one-dimensional equation $y' = f(y)$, and the backward Euler scheme with error can be written as
\begin{equation*}
  \frac{y^{k+1} - y^k}{s} = f(y^{k+1}) + \epsilon_k,
\end{equation*}
where $\epsilon_k\leq \epsilon$ is the error for the inner loop with $\epsilon>0$ being the tolerance. Let $\hat{y}^k$ be the exact backward Euler scheme solution, 
Then we have
\begin{equation*}
    C_k\lvert {y^{k+1} - \hat{y}^{k+1}} \rvert 
    \leq \lvert {y^{k} - \hat{y}^{k}} \rvert + \epsilon_k.
\end{equation*}
Where $C_k = \min_{\xi \in I(y^{k+1}, \hat{y}^{k+1})} \lvert 1-sf'(\xi)\rvert$, and $I(a,b)$ denote the interval $[\min(a,b), \max(a,b)]$. Hence
\begin{equation*}
  \lvert {y^{k} - \hat{y}^{k}} \rvert 
  \leq \sum_{j=0}^{k-1} \prod_{i=j}^{k-1} C_k^{-1} \epsilon_j 
  \leq \sum_{j=0}^{k-1} \prod_{i=j}^{k-1} C_k^{-1} \epsilon,
\end{equation*}
which provides a linear upper bound on the error. Similar arguments can be generalized to other solvers and for high-dimensional ODEs.

\subsection{Energy stability and convergence}

{Besides the linear stability, there is another notion of stability, known as {\bf energy stability}. We say a numerical method is energy stable if $F(\vh_{k+1}) \leq F(\vh_{k})$ for all $k$ \cite{shen2019new}.}
As an advantage of the proximal formulation, one can show that the backward Euler is unconditionally energy stable \cite{xu2019stability} for arbitrary $s$ even for non-convex $F(\vz)$ (here we assume that there exists $F(\vz)$ such that ${\bm f} (\vz) = -\nabla F(\vz)$). 
The energy stability guarantees that the numerical solver takes each step toward 
minimizing $F(\vz)$, which further lead to the convergence of the numerical scheme \cite{wang2021particle}. 
More precisely, we have the following result (see Appendix~\ref{appendix:proofs} for the detailed proof).
\begin{proposition}\label{thm:converge}
If $F(\vz)$ is continuous, coercive and bounded from below, for any choice of $s > 0$, there exists $\vh_{k+1}$ solves \eqref{eq:backward-euler}, such that
\begin{equation}\label{eq:decrease}
    F(\vh_{k+1}) \leq F(\vh_{k}).
\end{equation}
Moreover, for non-convex $F(\vz)$, $\vh_{k+1}$ is unique provided $s \leq - 1/ \lambda_1$, where $\lambda_1 < 0$ is the smallest eigenvalue of $\nabla^2 F$.
\end{proposition}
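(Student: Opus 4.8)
The plan is to realize $\vh_{k+1}$ as the global minimizer of the proximal objective $G(\vz) := \frac{1}{2s}\|\vz - \vh_k\|^2 + F(\vz)$ from \eqref{eq:inner-problem}; both the solvability of \eqref{eq:backward-euler} and the energy decrease \eqref{eq:decrease} will then drop out of global optimality, while uniqueness will follow from positive definiteness of the Hessian of $G$.

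For existence, note that $G$ is continuous because $F$ is, and $G$ is coercive because $G \ge F$ (the quadratic penalty is nonnegative) and $F$ is coercive. A continuous coercive function on $\mathbb{R}^d$ attains its infimum, so a minimizer $\vh_{k+1}$ exists for every $s > 0$. Since $F$ is differentiable (as implied by $\vf = -\nabla F$), the stationarity condition $\nabla G(\vh_{k+1}) = \vzero$ reads $\frac{1}{s}(\vh_{k+1}-\vh_k) + \nabla F(\vh_{k+1}) = \vzero$, and substituting $\nabla F = -\vf$ and rearranging recovers exactly $\vh_{k+1} = \vh_k + s\vf(\vh_{k+1})$, i.e. \eqref{eq:backward-euler}.

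The energy estimate is then immediate: global optimality gives $G(\vh_{k+1}) \le G(\vh_k) = F(\vh_k)$, since the quadratic term vanishes at $\vz = \vh_k$; discarding the nonnegative term $\frac{1}{2s}\|\vh_{k+1}-\vh_k\|^2 \ge 0$ on the left yields $F(\vh_{k+1}) \le F(\vh_k)$. This step uses neither convexity nor any upper bound on $s$. For uniqueness I would pass to the Hessian $\nabla^2 G(\vz) = \frac{1}{s}\mI + \nabla^2 F(\vz)$, whose eigenvalues are bounded below by $\frac{1}{s} + \lambda_1$. With $\lambda_1 < 0$ and $s \le -1/\lambda_1$ we get $\frac{1}{s} \ge -\lambda_1$, hence $\nabla^2 G \succeq 0$ and $G$ is convex. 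To force a single critical point, I would take two solutions $\vu,\vv$ of \eqref{eq:backward-euler}, subtract their stationarity conditions, and write $\nabla F(\vu) - \nabla F(\vv) = \mH(\vu-\vv)$ via the fundamental theorem of calculus, where $\mH := \int_0^1 \nabla^2 F(\vv + t(\vu-\vv))\,dt$ is symmetric with eigenvalues $\ge \lambda_1$. This produces $\big(\frac{1}{s}\mI + \mH\big)(\vu-\vv) = \vzero$, and since $\frac{1}{s}\mI + \mH$ is positive definite it is invertible, so $\vu = \vv$.

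The main obstacle is the uniqueness claim precisely at the endpoint $s = -1/\lambda_1$: the subtraction argument then only delivers $\frac{1}{s}\mI + \mH \succeq 0$, which need not be invertible, so strict positive definiteness --- and with it genuine uniqueness --- really requires the strict inequality $s < -1/\lambda_1$; I would present uniqueness under strong convexity and treat equality as the limiting boundary case. A secondary point to flag is the regularity bookkeeping: existence and \eqref{eq:decrease} require only continuity and coercivity of $F$, whereas identifying the minimizer as a solution of \eqref{eq:backward-euler} needs $F \in C^1$ and the Hessian-based uniqueness argument needs $F \in C^2$, consistent with the $\nabla^2 F$ that already appears in the statement.
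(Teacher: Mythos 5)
Your proposal is correct and follows essentially the same route as the paper's proof: both realize $\vh_{k+1}$ as a global minimizer of $G(\vz)=\frac{1}{2s}\|\vz-\vh_k\|^2+F(\vz)$, obtain existence from continuity and coercivity (the paper minimizes over the sublevel set $\{G(\vz)\leq G(\vh_k)\}$, you invoke coercivity of $G$ directly --- the same argument), derive \eqref{eq:decrease} by comparing the minimum value with $G(\vh_k)=F(\vh_k)$ and discarding the nonnegative quadratic term, and get uniqueness from $\nabla^2 G=\frac{1}{s}\mI+\nabla^2 F\succeq\big(\frac{1}{s}+\lambda_1\big)\mI$. Two of your refinements actually go beyond the paper. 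First, your endpoint caveat is a genuine correction: at $s=-1/\lambda_1$ the Hessian bound only yields $\nabla^2 G\succeq 0$, i.e.\ convexity, and a convex coercive $G$ can be constant on a segment --- in one dimension, take $F$ coercive with $F''\equiv\lambda_1$ on an interval and choose $\vh_k$ so that the (constant) slope of $G$ there vanishes --- so the paper's assertion that convexity plus coercivity gives a \emph{unique} minimizer is unjustified at equality, and strict inequality $s<-1/\lambda_1$ is what the argument really delivers. Second, your integral-Hessian subtraction argument establishes uniqueness of solutions of the nonlinear equation \eqref{eq:backward-euler} itself, slightly stronger than the paper's uniqueness of the minimizer of $G$, since a priori \eqref{eq:backward-euler} could admit non-minimizing critical points; your regularity bookkeeping ($C^1$ for the stationarity identification, $C^2$ for the Hessian step) is likewise consistent with what the statement tacitly assumes. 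The only material in the paper's proof absent from yours is a coda deducing convergence of $\{\vh_k\}$ to a stationary point by telescoping \eqref{eq:decrease}, but that is not part of the proposition as stated.
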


Similar energy stability analysis holds for other type of proximal solvers. For instance,
for BDF2, 
one can show there exists $\vh_{k+1}$ such that
\begin{equation*}
    F(\vh_{k+1}) + \frac{ \| \vh_{k+1} - \vh_{k} \|^2}{4s} \leq F(\vh_k) + \frac{ \| \vh_k - \vh_{k-1} \|^2}{4s}.
\end{equation*}
For Crank-Nicolson, 
the proximal formulation \eqref{eq:prox-cn} leads to
\begin{equation*}
\begin{aligned}
 F(\vh_{k+1}) +  \langle \vh_{k+1} - {\vh}_{k}, \nabla F({\vh}_k) \rangle \leq F(\vh_{k}).
\end{aligned}
\end{equation*}
These energy stability result can further lead to the convergence of these scheme in the discrete level \cite{matthes2019variational}.

\section{Experimental Results}\label{sec:exp}
In this section, we validate the efficacy of proximal solvers and contrast their performances with benchmark adaptive neural ODE solvers in solving the one-dimensional diffusion problem and learning continuous normalizing flows (CNFs) 
\cite{grathwohl2018scalable} 
and GRAND \cite{pmlr-v139-chamberlain21a}. 


\subsection{Solving one-dimensional diffusion equation}
\label{subsec:diffusion-1D}



We consider solving the following 1D diffusion equation 
$$
\frac{\partial h}{\partial t} = \frac{\partial^2 h}{\partial x^2}
$$ 
for $t\in [0,1]$ and $x\in [0,1]$ with a periodic boundary condition. We initialize $u(x,0)$ with the standard Gaussian. 
To solve the diffusion equation, we first partition $[0,1]$ into uniform grids $\{x_i\}_{i=1}^{128}$ and discretize $\partial^2h/\partial x^2$ using the central finite difference scheme, resulting in the following coupled ODEs
\begin{equation}\label{eq:diffusion-discrete}
\frac{d\vh}{dt} = -\mL \vh,
\end{equation}
where $\mL\in\RR^{128\times 128}$ is the Laplacian matrix of a cyclic graph scaled by $1/\Delta x^2$ with $\Delta x=1/127$ being the spatial resolution, and $\vh=[h(x_1),\ldots,h(x_{128})]$. Then we apply different ODE solvers to solve \eqref{eq:diffusion-discrete}. It is worth noting that the diffusion equation has an infinite domain of dependence, thus in method of lines discretization, the time step to be taken for explicit methods has to be much finer compared to the spatial discretization to guarantee numerical stability.


We compare our proximal methods against the predominantly 
used Adaptive Heun and DOPRI5 in solving \eqref{eq:diffusion-discrete}. 
As the matrix $\mL$ is circulant, we can compute the exact solution of \eqref{eq:diffusion-discrete} using the discrete Fourier transform for comparison. Figure~\ref{fig:Diffusison-comparison} compares the NFEs of proximal methods and two benchmark adaptive solvers with different errors at the final time. For each final error, we use a fixed step size for each proximal solver with FR 
for solving the inner minimization problem. A list of the configuration of each ODE solver can be found in Table~\ref{tab:laplace_config} in the appendix. Figure~\ref{fig:Diffusison-comparison} shows that adaptive solvers require 
many more NFEs than proximal solvers. NFEs required by explicit solvers are almost independent of the error, revealing that the step sizes are constrained by numerical stability. Each iteration of higher-order BDF schemes requires more NFEs than lower-order BDF schemes, showing a tradeoff between controlling numerical error and using high-order schemes.

\begin{figure}[!ht]
\centering
\begin{tabular}{c}
\includegraphics[clip, trim=0.01cm 0.01cm 0.01cm 0.01cm, width=0.75\columnwidth]{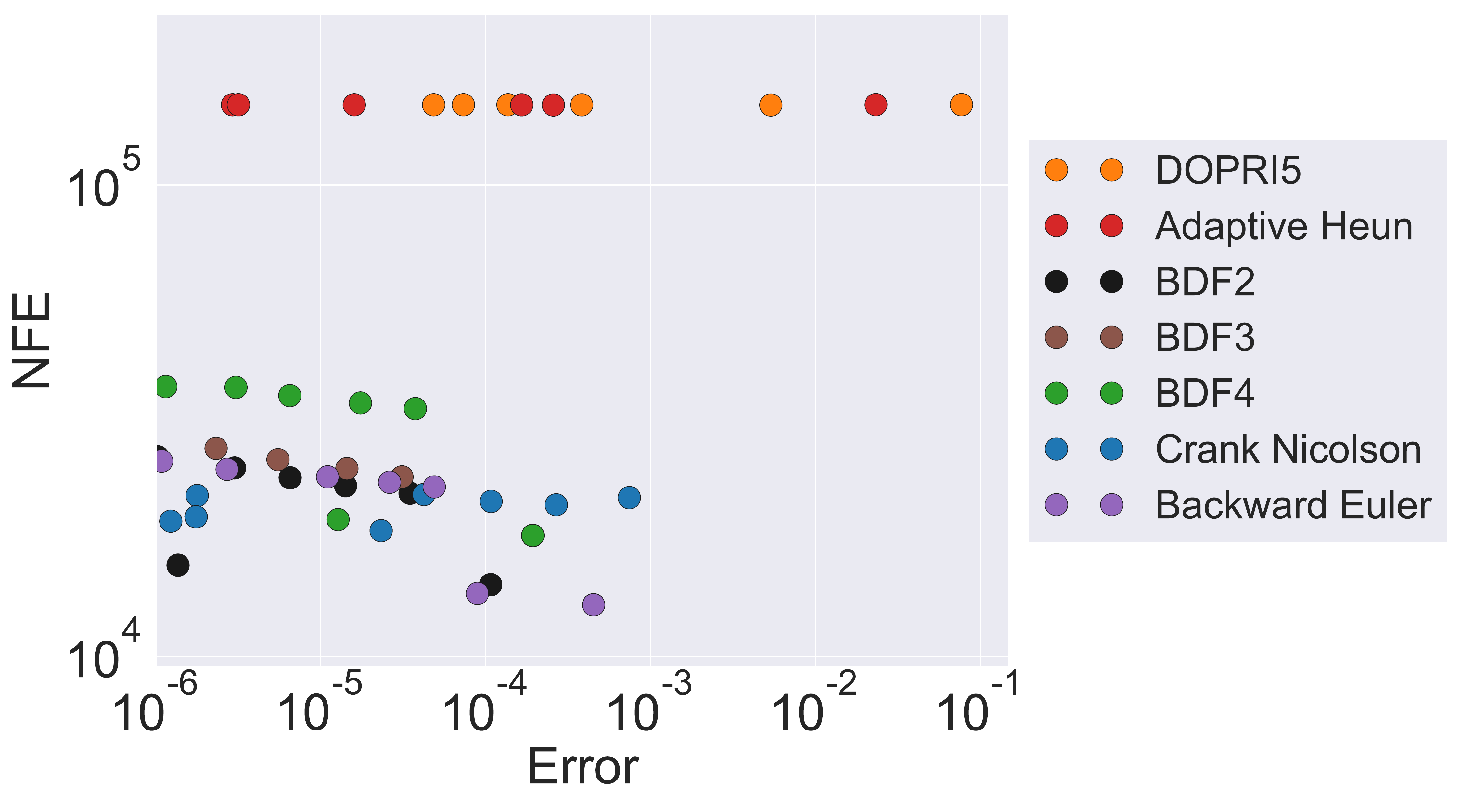}\\
\end{tabular}
\caption{Final step error vs. NFEs of different solvers in solving the one-dimensional diffusion equation \eqref{eq:diffusion-discrete}. Adaptive solvers require many more NFEs than proximal solvers, and NFEs required by explicit solvers are almost independent of the error since the step sizes here are constrained by numerical stability.}
\label{fig:Diffusison-comparison}
\end{figure}

\paragraph{Convergence of inner solvers.} 
An inner optimization algorithm is required for the proximal solver. We compare the efficiency of a few gradient-based optimization algorithms, including GD, NAG, Restart, and FR, 
for one pass of the proximal backward Euler solver. 
The comparison of different optimizers using the same step size $0.1$ (Figure~\ref{fig:compare-inner-optimizer}) shows that different solvers perform similarly to each other at the beginning of iterations, while as the iteration goes, FR performs best among the five considered solvers. 
The performance of different optimization algorithms further shows the rationale for solving the inner problem using gradient-based optimization algorithms since the convergence is consistent with the desired performance of each optimizer. We will use FR as the default inner solver for all experiments below. 
\begin{figure}[!ht] 
\centering
\begin{tabular}{cc}
\includegraphics[width=0.48\linewidth]{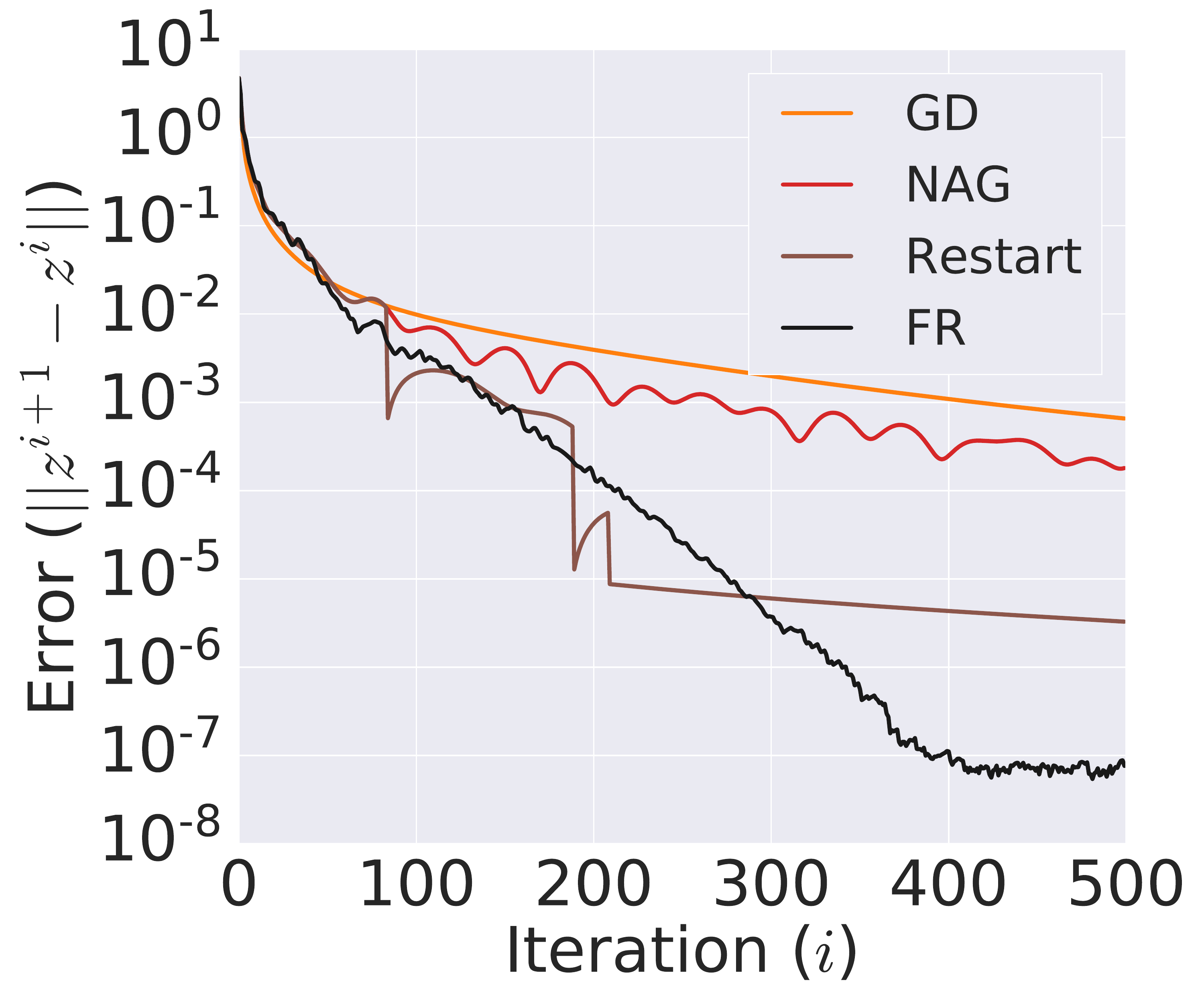}&
\includegraphics[width=0.48\linewidth]{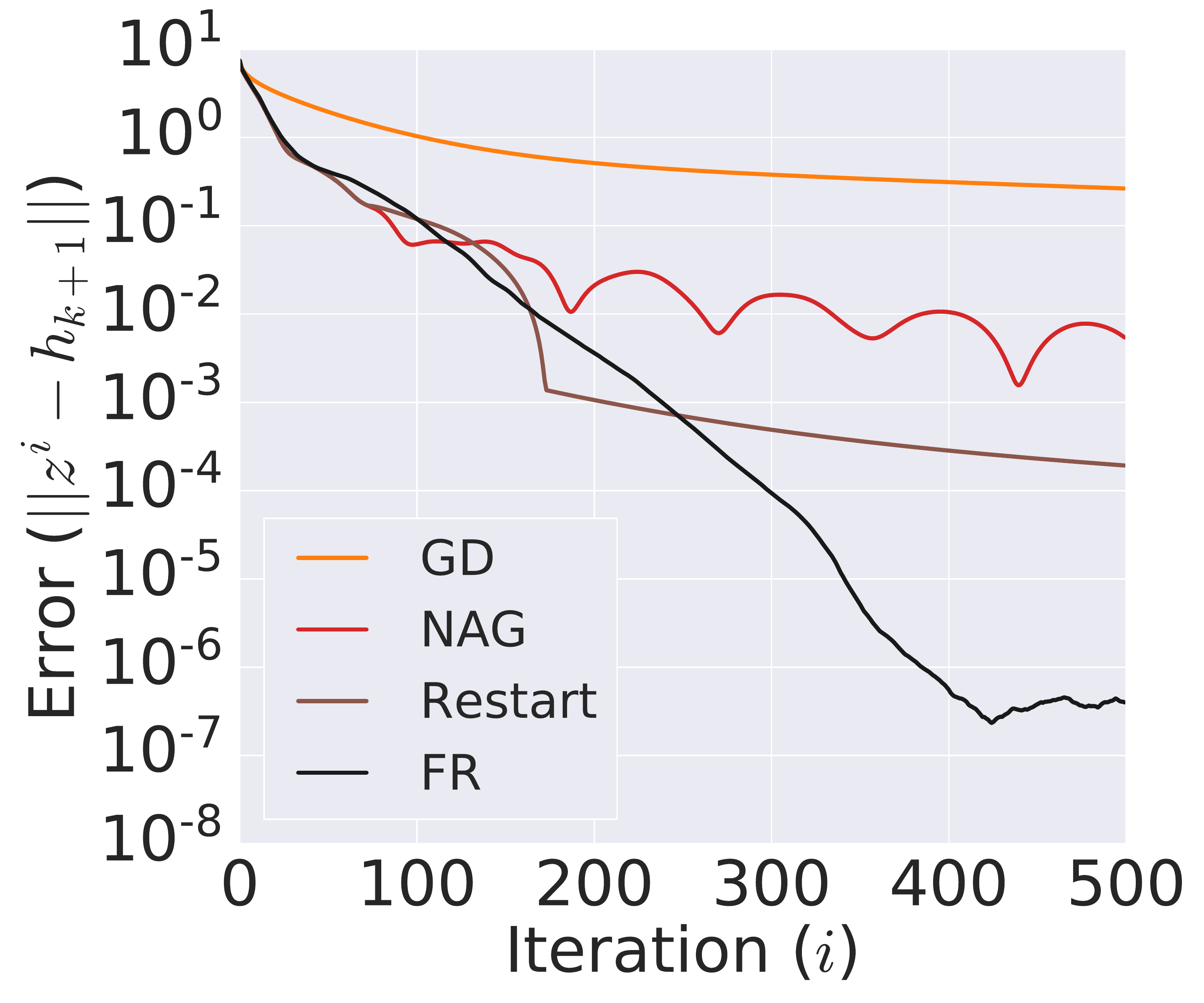}\\
(a) & (b) \\
\end{tabular}
\caption{Convergence comparison of different inner solvers for the proximal Backward Euler. (a) Convergence of $\|\vz^{i+1}-\vz^i\|$, and (b) convergence of $\|\vz^i-\vh_{k+1}\|$ for $k=0$. 
}\label{fig:compare-inner-optimizer}
\end{figure}

\paragraph{Proximal vs. nonlinear solvers.} We further verify the computational advantages of proximal solvers over other nonlinear root-finding algorithms for directly solving the implicit ODE solvers. In particular, we consider solving the 1D diffusion equation mentioned before using backward Euler solver given by \eqref{eq:backward-euler}, where we use proximal backward Euler and directly solve \eqref{eq:backward-euler} using either fixed point (FP) iteration or the Newton-Raphson method (NR). Figure~\ref{fig:1D-diffusion} (a) below shows the time required by different solvers when the spatial interval $[0,1]$ is discretized into different number of grids, controlling the scale of the problem. Also, the computational time of proximal solver is almost linear w.r.t. NFE (Fig.~\ref{fig:1D-diffusion} (b)). Proximal solver beats FP and NR. NR is not scalable to high-dimension since it requires evaluating Jacobian, which is very expensive in computational time and memory costs. 

\begin{figure}[!ht]
\centering
\begin{tabular}{cc}
\includegraphics[clip, trim=0.01cm 0.01cm 0.01cm 0.01cm, width=0.48\columnwidth]{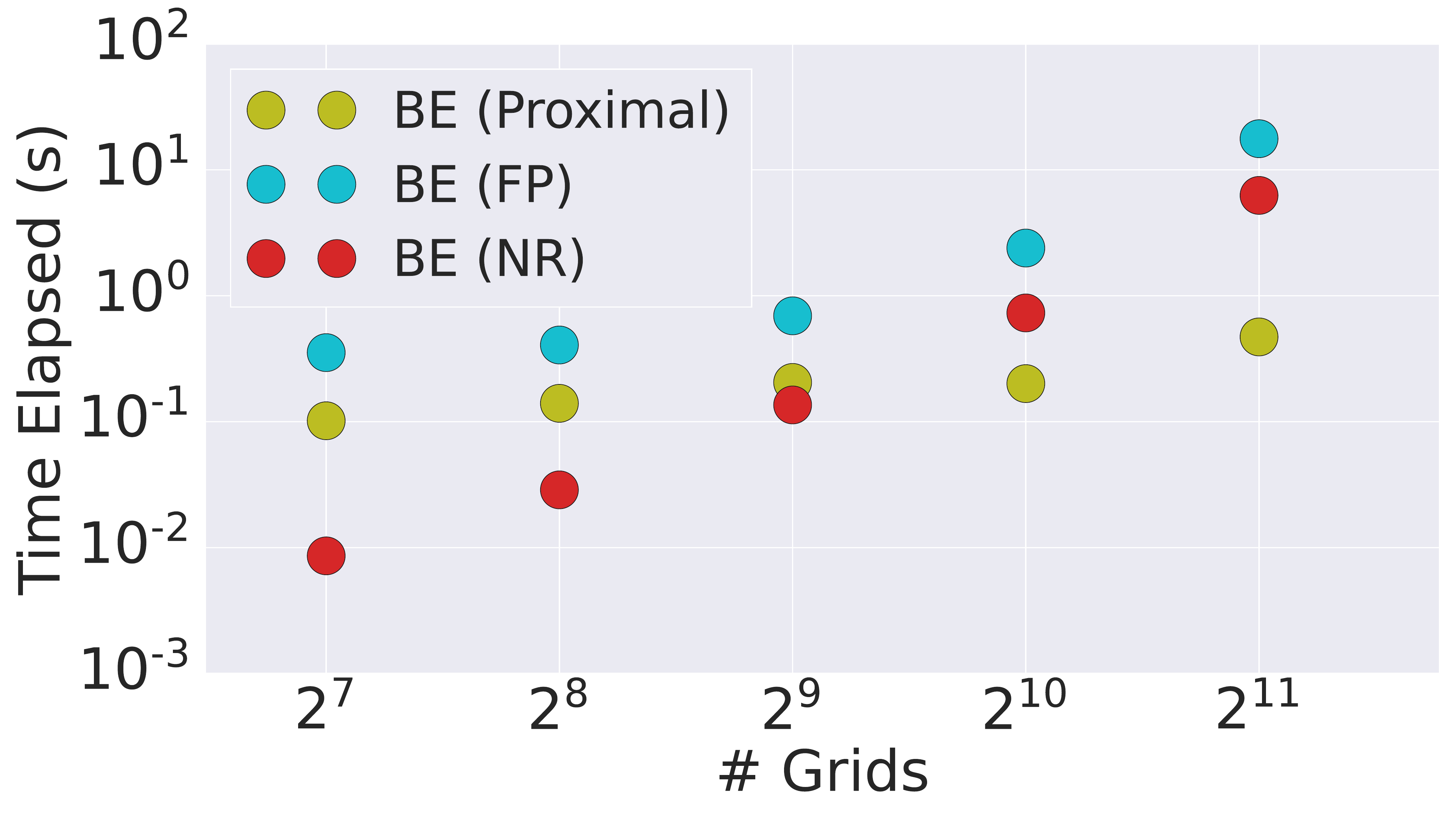}&
\includegraphics[clip, trim=0.01cm 0.01cm 0.01cm 0.01cm, width=0.48\columnwidth]{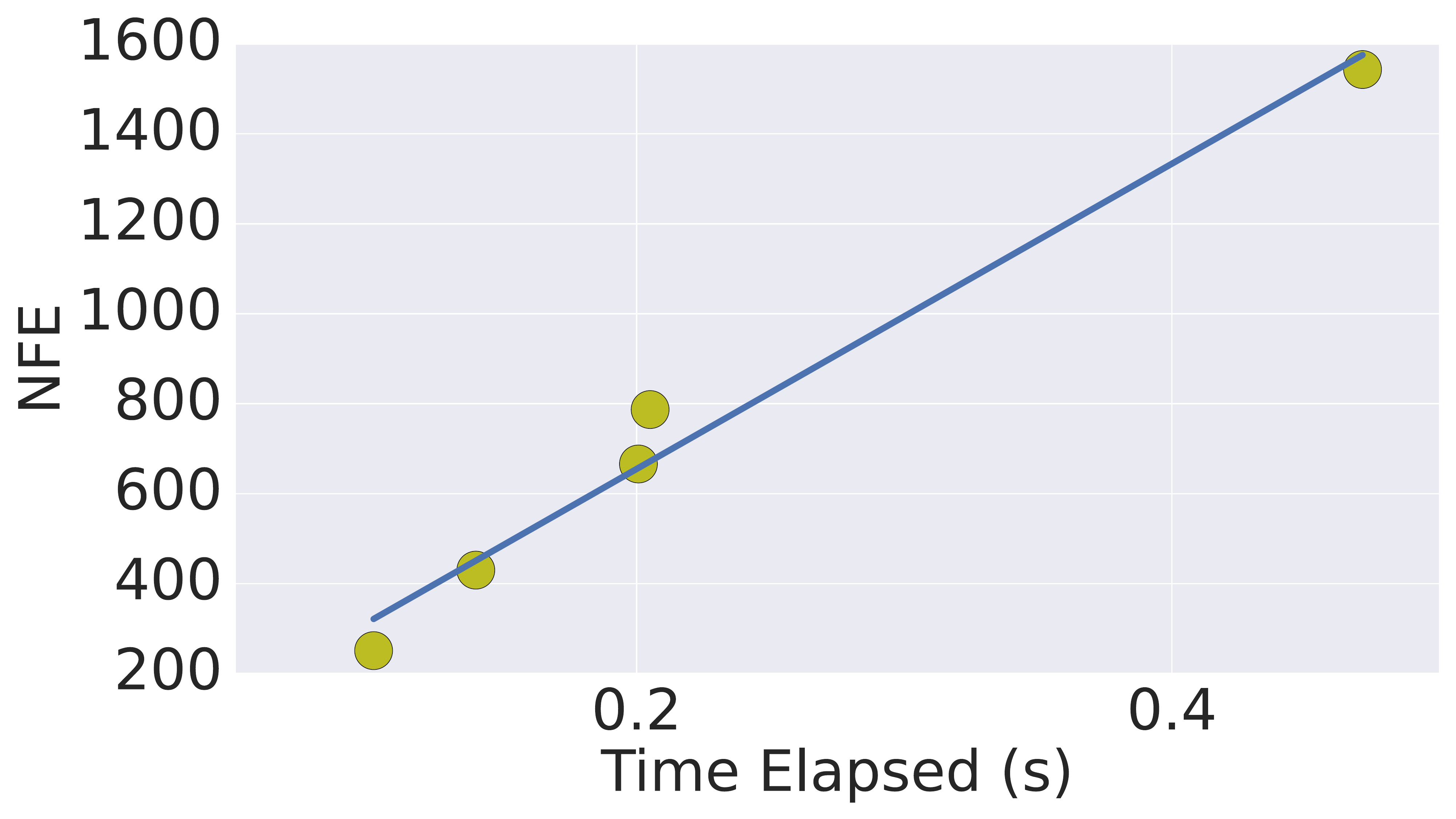}\\
{(a) Number of grids vs. time} & {(b) NFE vs. time}\\
\end{tabular}
\caption{(a) Computational time of solving 1D diffusion equation with different number of grids 
by backward Euler (BE) using proximal, FP, and NR solvers. (b) Computational time vs. NFE of proximal backward Euler solver for solving 1D diffusion equation.
}
\label{fig:1D-diffusion}
\end{figure}

{\it 2)} 
NFE is the number of function calls during integration; each optimization step takes one call, 
the reported NFEs of the proximal solvers in Sec.~4 equals the product of inner and outer iterations.
We solve inner problem using Fletcher-Reeves (see Sec.~4.1), which is much faster than GD.

{\it 3)} First, the $A$-stable domain determines the largest step size that the algorithm can take. The linear stability analysis shows that the implicit solver can use much larger step sizes than explicit solvers with numerical stability guarantee. Second, regarding why our theory shows implicit solver is ``faster'' than explicit solver: Notice that the step size used for solving ODEs in a given time interval is constrained by local truncation error and numerical stability, and the later is often the dominating factor, as illustrated in Fig.~1 and Sec. 1.1 in our paper. Since the implicit solver uses a much larger step size than explicit solvers with stability guarantee, the implicit solver takes fewer NFEs than explicit solvers.

















\subsection{Learning CNFs}

\begin{figure}[t!]
\centering
\begin{tabular}{ccc}
\includegraphics[clip, trim=0.01cm 0.01cm 0.01cm 0.01cm, width=0.27\columnwidth]{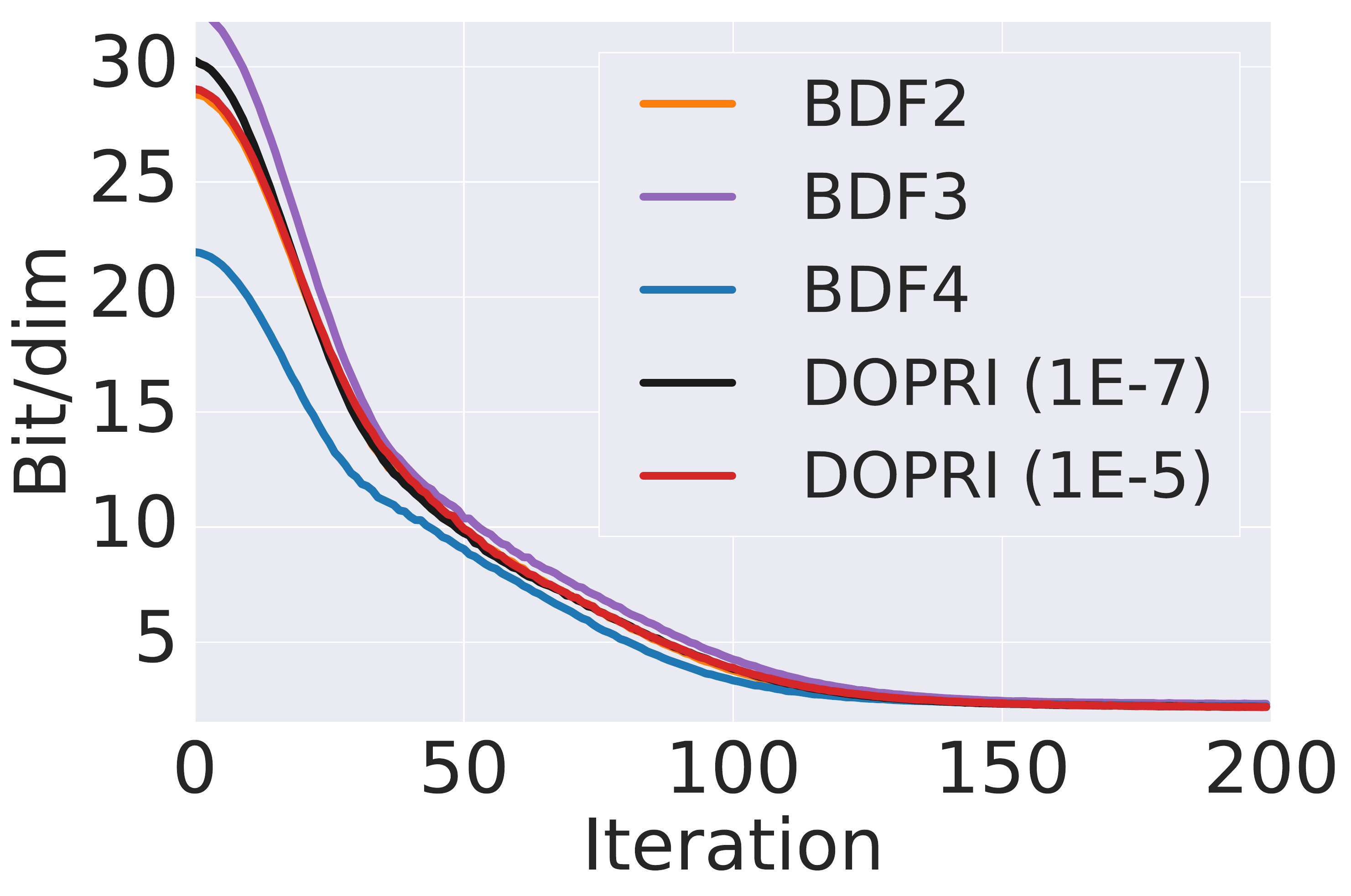}&
\includegraphics[clip, trim=0.01cm 0.01cm 0.01cm 0.01cm, width=0.27\columnwidth]{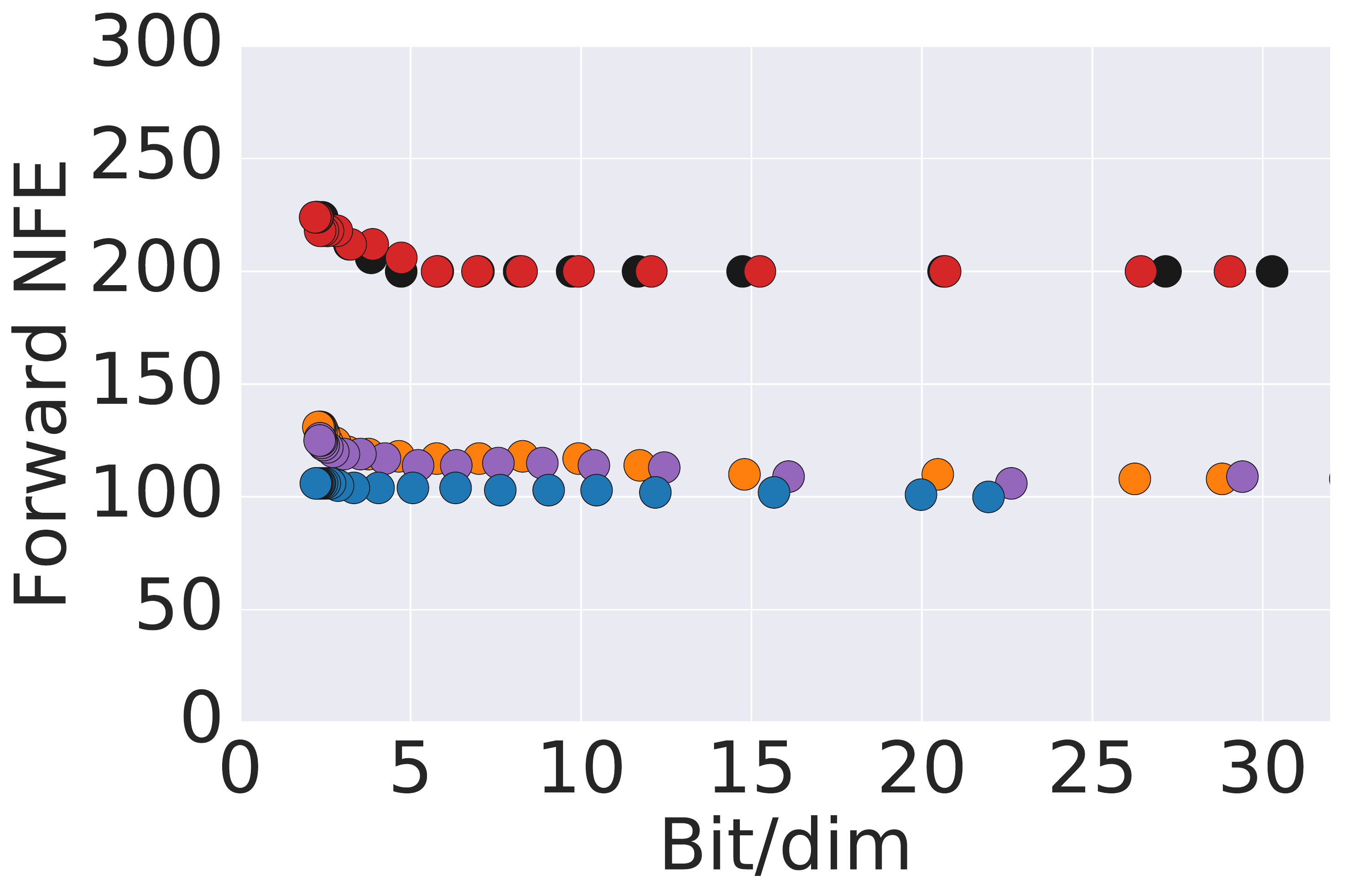}&
\includegraphics[clip, trim=0.01cm 0.01cm 0.01cm 0.01cm, width=0.4\columnwidth]{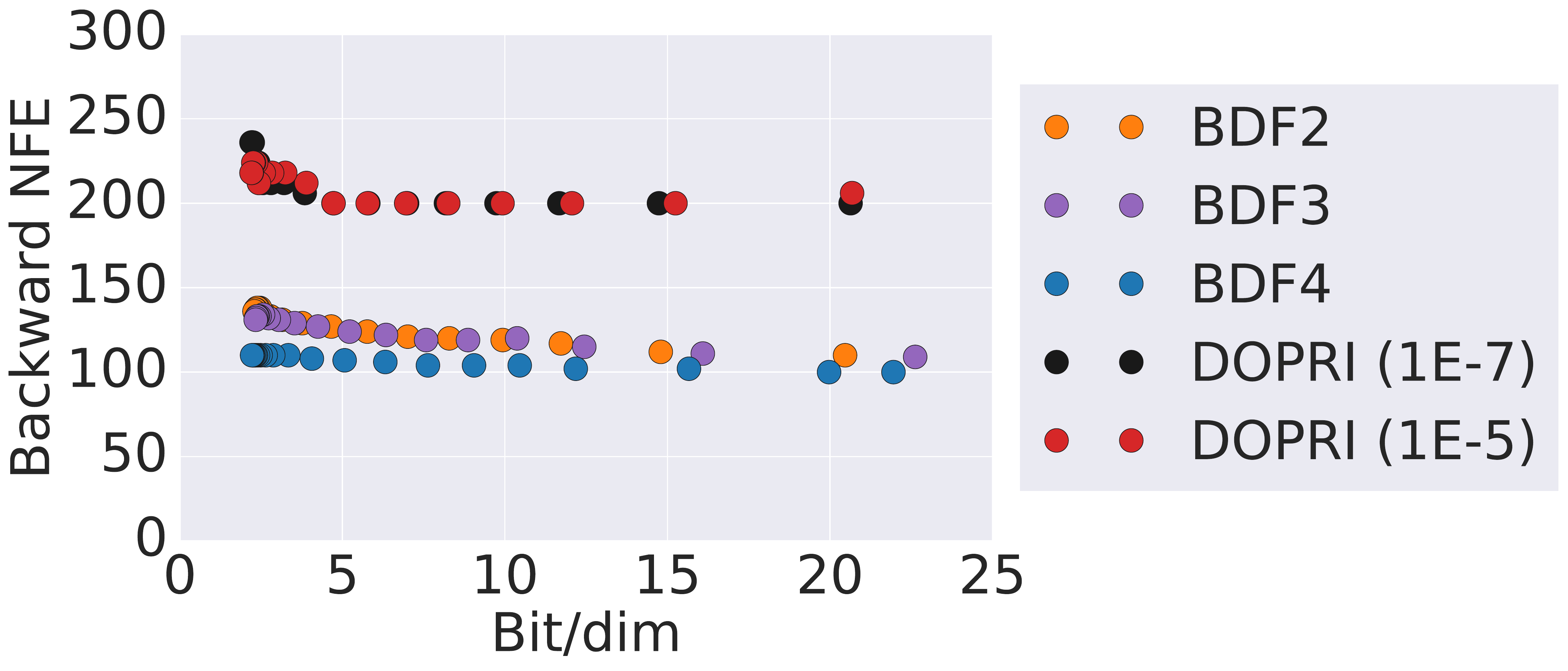}\\[-3pt]
(a) & (b) & (c)\\
\end{tabular}
\caption{Contrasting BDFs with DOPRI5 using different error tolerances for training CNFs for MNIST image generation. BDFs converge as well as DOPRI5 using very small error tolerances but require much fewer NFEs in solving both neural ODE and its adjoint ODE.
}
\label{fig:cnf_err_nfe}
\end{figure}

\begin{figure}[t!]
\centering
\begin{tabular}{ccc}
\includegraphics[clip, trim=0.01cm 0.01cm 0.01cm 0.01cm, width=0.42\columnwidth]{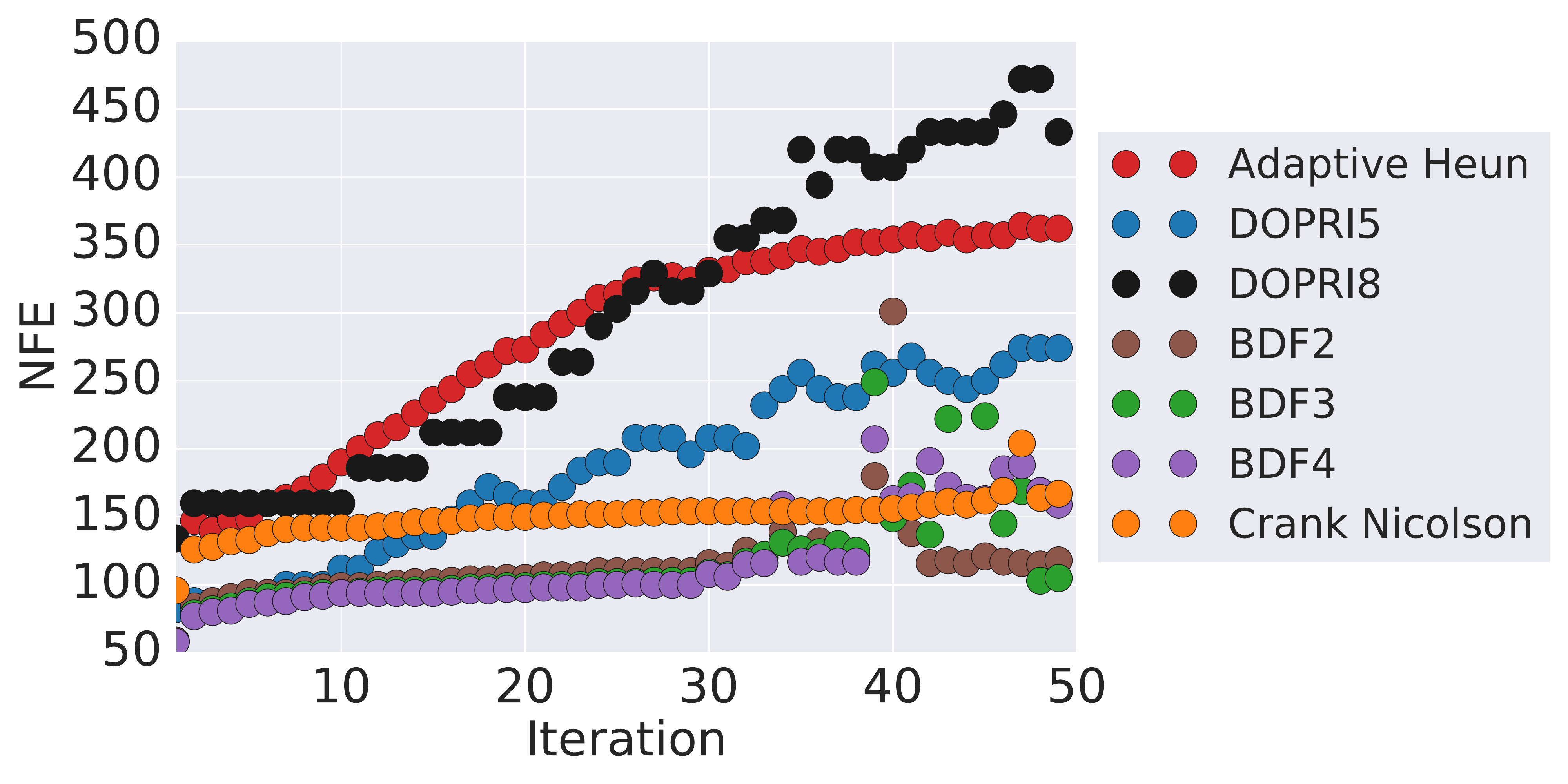}&
\includegraphics[clip, trim=0.01cm 0.01cm 0.01cm 0.01cm, width=0.26\columnwidth]{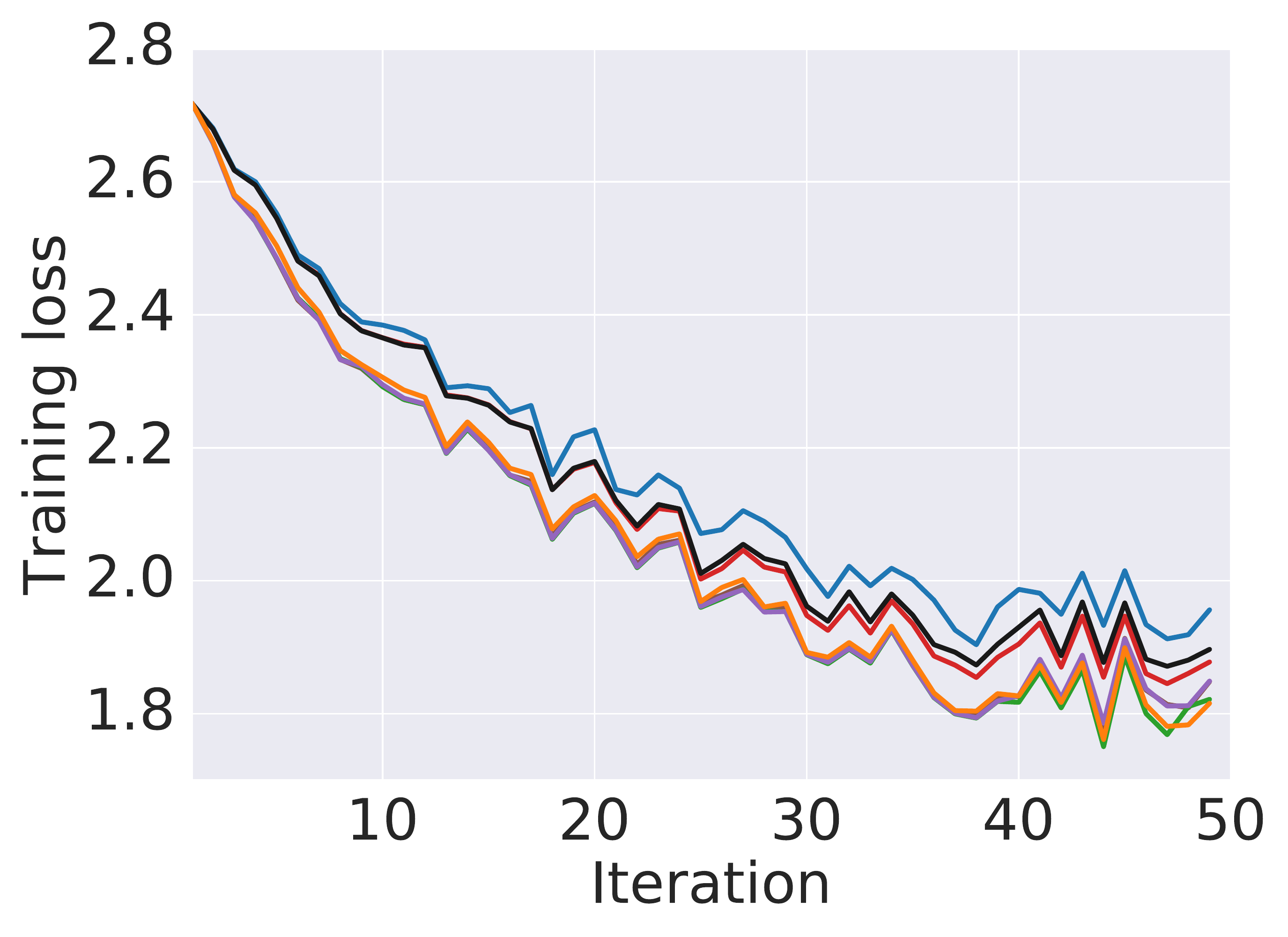}&
\includegraphics[clip, trim=0.01cm 0.01cm 0.01cm 0.01cm, width=0.26\columnwidth]{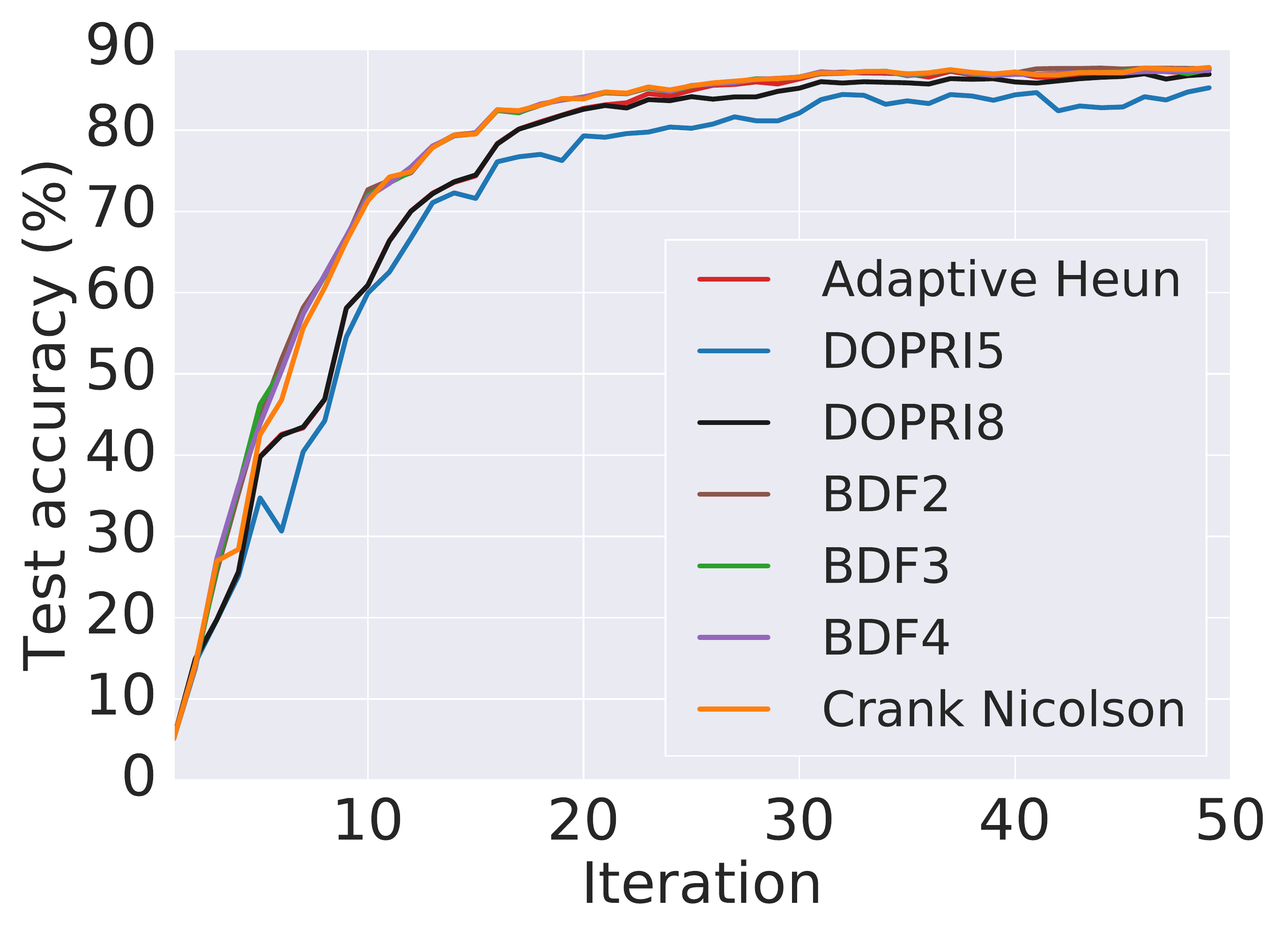}\\
(a) & (b)  & (c) \\
\end{tabular}
\caption{Comparison of proximal solvers with adaptive solvers in training GRAND for the CoauthorCS node classification. Proximal solvers require much fewer NFEs than adaptive solvers but converges as well as adaptive solvers in training loss and validation accuracy.
}
\label{fig:grand-comparison}
\end{figure}

We train CNFs for MNIST \cite{lecun-mnisthandwrittendigit-2010} generation using proximal solvers and contrast them to adaptive solvers. In particular, we use the FFJORD approach outlined in \cite{grathwohl2018scalable}. We use an architecture of a multi-scale encoder and two CNF blocks, containing convolutional layers of dimension $64\rightarrow64\rightarrow64\rightarrow64$ with a uniform stride length of 1 and a softplus activation function. We use two scale blocks and train for 10 epochs using Adam optimizer \cite{kingma2014adam} with an initial learning rate of $0.001$.
We run the model using the proximal BDFs with FR and DOPRI5 with relative tolerance of $10^{-5}$ and $10^{-7}$, respectively. All the other experimental settings are adapted from \cite{grathwohl2018scalable}. 
The proximal BDFs are tuned to have an error profile which was consistent with the DOPRI5 as shown in Figure~\ref{fig:cnf_err_nfe} (a). We choose $t_0=0$ and $T=1$ and the step sizes for numerical integration and the inner optimizer are 0.2 and 0.3, respectively. We terminate the inner optimization solver once $\|\vz^{i+1}-\vz^i\|\leq 2\times 10^{-3}$. A comparison of the forward and backward NFEs are depicted in Figure~\ref{fig:cnf_err_nfe} (b) and (c), respectively. We see that proximal BDF2, BDF3, and BDF4 require approximately half of the NFEs as DOPRI5 to reach similar training curves. 

\subsection{Training GRAND}\label{subsec:GRAND}

GRAND is a continuous-depth graph neural network proposed in \cite{pmlr-v139-chamberlain21a}. It consists of a dense embedding layer, a diffusion layer, and a dense output layer, with ReLU activation function in between. The diffusion layer takes $\vu(t_0)$ as input and solves \eqref{eq:GRAND-int2} below
to time $T$
\begin{equation}\label{eq:GRAND-int2}
    \frac{d \vu(t)}{d t} = \mA(\vu, \theta) \vu - \vu,
\end{equation}
where $\mA$ is the graph attention function \cite{velickovic2018graph} with learnable parameter set $\theta$. 
This ODE to be solved in GRAND is nonlinear and diffusive, which is particularly challenging to solve numerically.

We compare the performance of proximal solvers with three major adaptive ODE solver options (DOPRI5, DOPRI8, Adaptive Heun) on training the GRAND model for CoauthorCS node classification. The CoauthorCS graph contains 18333 nodes and 81894 edges, and each node is represented by a 6805 dimensional vector. The graph nodes contains 15 
classes, and we aim to classify the unlabelled nodes. 
We choose $t_0=0$ and $T=10$ as our starting and ending times of the ODE 
\eqref{eq:GRAND-int2}. 
For each solver, we run the experiment for 50 epochs and record the average NFEs per epoch and accuracy of trained models. For adaptive solvers we choose tolerance $10^{-4}$, and for proximal solvers we choose the integration step size 1.25 (8 steps in total) so that for the first iteration with same input admits output with error within $10^{-3}$. For optimization method we use Fletcher-Reeves with learning rate 0.3, and stop when error is below $10^{-4}$ tolerance. All the other experimental settings are adapted from \cite{pmlr-v139-chamberlain21a}. Figure~\ref{fig:grand-comparison} shows the advantage of proximal methods over adaptive solvers in training efficiency and maintaining the convergence of the training.





\section{Concluding Remarks}\label{sec:conclusion}
We propose accelerating learning neural ODEs using proximal solvers, including proximal BDFs and proximal Crank-Nicolson. These proximal schemes approach the corresponding implicit schemes as the accuracy of the inner solver enhances. Compared to the existing adaptive explicit ODE solvers, the proximal solvers are better suited for solving stiff ODEs for numerical stability guarantees. 
We validate the efficiency of proximal solvers on learning benchmark graph neural diffusion and continuous normalizing flows. There are several interesting future directions. In particular, 1) accelerating the inner solver from the fixed point iteration viewpoint, e.g., leveraging Anderson acceleration, and 2) developing adaptive step size proximal solvers by integrating proximal solvers of different orders.



\appendix

\section{Dormand-Prince Method: A Review
}\label{sec:explicit-solvers}

In this section, we briefly review the scheme, error control, and step size rule of the Dormand-Prince method, which is an explicit adaptive numerical ODE solver. 
The one step calculation, from $t_k$ to $t_{k+1}$ with step size $s$, in the Dormand-Prince method for solving \eqref{eq:NODE} is summarized below:
First, we update $\vh_k$ to $\vh_{k+1}$ using Runge-Kutta method of order 4.
$$
\begin{aligned}
\vk_1 &= s\vf(t_k,\vh_k)\\
\vk_2 &= s\vf\Big(t_k+\frac{1}{5}s,\vh_k+\frac{1}{5}\vk_1\Big)\\
\vk_3 &= s\vf\Big(t_k+\frac{3}{10}s,\vh_k+\frac{3}{40}\vk_1+\frac{9}{40}\vk_2\Big)\\
\vk_4 &= s\vf\Big(t_k+\frac{4}{5}s,\vh_k+\frac{44}{45}\vk_1-\frac{56}{15}\vk_2+\frac{32}{9}\vk_3\Big)\\
\vk_5 &= s\vf\Big(t_k+\frac{8}{9}s,\vh_k+\frac{19372}{6561}\vk_1-\frac{25360}{2187}\vk_2+\frac{64448}{6561}\vk_3-\frac{212}{729}\vk_4\Big)\\
\vk_6 &= s\vf\Big(t_k+s,\vh_k+\frac{9017}{3168}\vk_1-\frac{355}{33}\vk_2-\frac{46732}{5247}\vk_3+\frac{49}{176}\vk_4-\frac{5103}{18656}\vk_5\Big)\\
\vk_7 &= s\vf\Big(t_k+s,\vh_k+\frac{35}{384}\vk_1+\frac{500}{1113}\vk_3+\frac{125}{192}\vk_4-\frac{2187}{6784}\vk_5+\frac{11}{84}\vk_6\Big)
\end{aligned}
$$
And the $\vh_{k+1}$ is calculated as
$$
\vh_{k+1}=\vh_k+\frac{35}{384}\vk_1+\frac{500}{1113}\vk_3+\frac{500}{192}\vk_4-\frac{2187}{6784}\vk_5+\frac{11}{84}\vk_6.
$$
Second, we update $\vh_k$ to $\vh'_{k+1}$ by Runge-Kutta method of order 5 as
$$
\begin{aligned}
\vh_{k+1}'&=\vh_k+\frac{5179}{57600}\vk_1+\frac{7571}{16695}\vk_3+\frac{393}{640}\vk_4-\frac{92097}{339200}\vk_5 +\frac{187}{2100}\vk_6+\frac{1}{40}\vk_7.
\end{aligned}
$$
We consider $\|\vh'_{k+1}-\vh_{k+1}\|$ as the error in $\vh_{k+1}$, and given error tolerance $\epsilon$ we select the adaptive step size at this step to be
$$
s_{\rm opt}=s\Bigg(\frac{\epsilon s}{2\|\vh_{k+1}-\vh_{k+1}'\|} \Bigg)^{\frac{1}{5}}.
$$



Many other adaptive ODE solvers exist, and some are also used to learn neural ODEs, e.g., the adaptive Heun's method, which is a second-order ODE solver. More adaptive step solvers can be found at \cite{atkinson2011numerical}.



\section{Derivation of Proximal ODE Solvers}
In this section, we show the equivalence of the implicit solvers with their proximal formulation.

\begin{itemize}

\item Crank-Nicolson

$$
\begin{aligned}
&\frac{d}{d\vz} \left( \frac{\|{\vz} - {\vh}_k\|^2}{s} + F({\vz})+ \langle {\vz} - {\vh}_{k}, \nabla F({\vh}_k) \rangle \right) \Big|_{\vh_{k+1}}\\
=& 2 \left( \frac{\vh_{k+1} - \vh_k}{s} + \frac{1}{2} (\nabla F(\vh_{k+1}) + \nabla F(\vh_{k}) ) \right),
\end{aligned}
$$

\item BDF2: 
$$
\begin{aligned}
&\frac{d}{d\vz} \left( \frac{\|{\vz} - \vh_k\|^2}{s}  -  \frac{\|\vz - \vh_{k-1}\|^2}{4 s}+ F({\vz}) \right) \Big|_{\vh_{k+1}}\\
=&  \frac{1}{s} \Big( 2 (\vh_{k+1} - \vh_k) - \tfrac{1}{2} (\vh_{k+1} - \vh_{k-1}) \Big) + \nabla F(\vh_{k+1}),
\end{aligned}
$$

\item BDF3:
$$
\begin{aligned}
&\frac{d}{d\vz} \left( \frac{3 \|{\vz} - \vh_k\|^2}{2 s}  -  \frac{3 \|\vz - \vh_{k-1}\|^2}{4 s}  + \frac{\|\vz - \vh_{k-2}\|^2}{6 s} + F({\vz}) \right) \Big|_{\vh_{k+1}}\\
=& \frac{1}{s} \Big( \tfrac{11}{6} \vh_{k+1} - (3 \vh_{k} - \tfrac{3}{2} \vh_{k-1} + \tfrac{1}{3} \vh_{k-2} ) \Big) + \nabla F(\vh_{k+1}),
\end{aligned}
$$

\item BDF4:
$$
\begin{aligned}
&\frac{d}{d\vz} \left(  \frac{3 \|{\vz} - \vh_k\|^2}{2 s}  -  \frac{3 \|\vz - \vh_{k-1}\|^2}{4 s} + \frac{\|\vz - \vh_{k-2}\|^2}{6 s} + F({\vz} \right) \Big|_{\vh_{k+1}}\\
=& \frac{1}{s} \Big( \tfrac{25}{12} \vh_{k+1} - (4 \vh_{k} - 3 \vh_{k-1} + \tfrac{4}{3} \vh_{k-2} - \tfrac{1}{4} \vh_{k-3} ) \Big) + \nabla F(\vh_{k+1}).
\end{aligned}
$$

\end{itemize}

\section{Missing Proofs}\label{appendix:proofs}
\begin{proof}[Proof of Proposition~\ref{thm:converge}]
For any given $\vh_k$, recall the definition of $G(\vz)$ in \eqref{eq:inner-problem}.

If $F(\vz)$ is a convex function, clearly, $G(\vz)$ admits a unique minimizer. If $F(\vz)$ is not convex, denote $\lambda_1 \leq 0$ be the smallest eigenvalue of $\nabla^2 F$. Direct computation shows that
\begin{equation}
    \nabla^2 G(\vz) = \frac{1}{s} {\bm I} + \nabla^2 F(\vz),
\end{equation}
which indicates that $G(\vz)$ is a convex function if  $s \leq - 1/ \lambda_1$. 
Notice that $ \lim_{|\vz| \rightarrow \infty} G(\vz) = \infty$, $G(\vz)$ admits a unique minimizer in $\mathbb{R}^d$.

If $G(\vz)$ is not convex, we define $\mathcal{S} = \{ G(\vz) \leq G( \vh_k )  \}$. By the coerciveness and continuity of $F(\vz)$,  $\mathcal{S}$
is a non-empty, bounded, and closed set. Hence, $G(\vz)$ admits a minimizer (not unique) $\vz^*$ in $\mathcal{S}$. We can take $\vh_{k+1} = \vz^*$, then  
\begin{equation*}
\frac{1}{2 s} \| \vh_{k+1} - \vh_{k} \|^2 + F(\vh_{k+1}) \leq F(\vh_k), 
\end{equation*}
which gives us Equation \eqref{eq:decrease}. Hence, the scheme is unconditionally energy stable.

For the sequence $\{ \vh_k \}$, since
\begin{equation*}
\| \vh_{k} - \vh_{k-1} \|^2 \leq 2 s(F(\vh_{k-1}) - F(\vh_{k})),
\end{equation*}
we have
\begin{equation*}
\sum_{k=1}^N \| \vh_{k} - \vh_{k-1} \|^2 \leq 2 s (F(\vh_{0}) - F(\vh_{N})) \leq C,
\end{equation*}
for some constant $C$ that is independent with $N$. Hence
\begin{equation*}
\lim_{N \rightarrow\infty} \| \vh_{N} - \vh_{N-1} \| = 0,
\end{equation*}
which indicates the convergence of $\{ \vh_k \}$. Moreover, since
\begin{equation*}
\vh_{N} = \vh_{N-1} - s \nabla F(\vh_{N}),
\end{equation*}
we have
\begin{equation*}
\lim_{N \rightarrow \infty} \nabla F (\vh_N) = 0,
\end{equation*}
so $\{\vh_k\}$ converges to a stationary point of $F(\vz)$.
\end{proof}

\section{Some More Experimental Details}\label{appendix-Configurations}
\subsection{Configurations of solvers for solving the one-dimensional diffusion equation}


We list the numerical integration step size and the corresponding final step error of each ODE solver in Table~\ref{tab:laplace_config}. Here, we use FR optimizer to solve the inner optimization problem with step size $0.1$. We stop the inner solver when $\|\vz^{i+1}-\vz^i\| \leq 5\times 10^{-9}$.


\begin{table}[!ht]
\fontsize{10.0}{10.0}\selectfont
\centering
\begin{threeparttable}
\caption{The configuration of different ODE solvers for solving the 1D Diffusion equation in Section~\ref{subsec:diffusion-1D}. Here, we use GD with FR optimization to solve the inner optimization problem with the step size $0.1$. Figure \ref{fig:Diffusison-comparison} is generated by considering a range of inner optimization tolerances from $10^{-3}$ to $10^{-7}$. The final step error is the error between the numerical solution at $t=1$ and the exact solution. We report in the following figure the smallest final step error for each proximal algorithm.
}\label{tab:laplace_config}
\begin{tabular}{rccc}
\toprule[1.0pt]
Proximal algorithm & Numerical integration step size $s$ & Optimization step size $\eta$ & Final step error \cr
\midrule[0.8pt]
Backward Euler & $1/2000$ & $0.1$ & $4.68$e$-7$\\
Crank Nicolson & 
$1/2000$
& $0.1$ & $1.75$e$-6$\\
BDF2 & 
$1/2000$
& 
$0.1$
& $1.36$e$-6$ \\
BDF3 & $1/2000$ & $0.1$ & $5.71$e$-7$ \\
BDF4 & $1/2000$ & $0.1$ & $1.15$e$-6$ \\
\bottomrule[1.0pt]
\end{tabular}
\end{threeparttable}
\end{table}



\end{document}